\newcommand{\changefont}{\fontsize{7}{9}\selectfont}
\newcolumntype{L}[1]{>{\raggedright\let\newline\\\arraybackslash\hspace{0pt}}m{#1}}
\newcolumntype{C}[1]{>{\centering\let\newline\\\arraybackslash\hspace{0pt}}m{#1}}
\newcolumntype{R}[1]{>{\raggedleft\let\newline\\\arraybackslash\hspace{0pt}}m{#1}}
\newcommand{\cut}[1]{{}}
\newcommand{\EE}{{\mathbb{E}}} 				
\newcommand{\PP}{\mathbb{P}} 				
\newcommand{\RR}{\mathbb{R}}				
\newcommand{\HH}{\mathbb{H}}				
\newcommand{\NN}{\mathbb{N}}				
\newcommand{\DeclareAutoPairedDelimiter}[3]{%
	\expandafter\DeclarePairedDelimiter\csname Auto\string#1\endcsname{#2}{#3}%
	\begingroup\edef\x{\endgroup
		\noexpand\DeclareRobustCommand{\noexpand#1}{%
			\expandafter\noexpand\csname Auto\string#1\endcsname*}}%
	\x}
\DeclareAutoPairedDelimiter{\p}{(}{)} 					
\DeclareAutoPairedDelimiter{\sp}{[}{]} 					
\DeclareAutoPairedDelimiter{\abs}{|}{|} 					
\DeclareAutoPairedDelimiter{\cp}{\{}{\}} 				
\DeclareAutoPairedDelimiter{\dotp}{\langle}{\rangle} 	
\DeclareAutoPairedDelimiter{\n}{\Vert}{\Vert} 			
\DeclareAutoPairedDelimiter{\cl}{\lceil}{\rceil}
\newcommand{\cF}{{\mathcal{F}}}
\newcommand{\cG}{{\mathcal{G}}}
\newcommand{\cO}{{\mathcal{O}}}
\newcommand{\bc}{\begin{center}}
\newcommand{\ec}{\end{center}}
\newcommand{\bdm}{\begin{displaymath}}
\newcommand{\edm}{\end{displaymath}}
\newcommand{\beq}{\begin{equation}}
\newcommand{\eeq}{\end{equation}}
\newcommand{\bfl}{\begin{flushleft}}
\newcommand{\efl}{\end{flushleft}}
\newcommand{\bt}{\begin{tabbing}}
\newcommand{\et}{\end{tabbing}}
\newcommand{\beqn}{\begin{align}}
\newcommand{\eeqn}{\end{align}}
\newcommand{\beqs}{\begin{align*}} 
\newcommand{\eeqs}{\end{align*}}  
\newtheoremstyle{Fancyplain}
{\topsep}   
{\topsep}   
{\itshape}  
{0pt}       
{\bfseries} 
{}         
{5pt plus 1pt minus 1pt} 
{\thmname{#1} \thmnumber{#2}. \thmnote{\normalfont\bfseries#3.}}
\theoremstyle{Fancyplain}
\newtheorem{thm}{Theorem}
\newtheorem{lem}{Lemma}
\newtheorem{cor}[lem]{Corollary}
\newtheorem{prop}[lem]{Proposition}
\crefname{thm}{Thm.}{Thms.}
\Crefname{thm}{Theorem}{Theorems}
\crefname{lem}{Lem.}{Lems.}
\Crefname{lem}{Lemma}{Lemmas}
\crefname{cor}{Cor.}{Cors.}
\Crefname{cor}{Corollary}{Corollaries}
\crefname{prop}{Prop.}{Props.}
\Crefname{prop}{Proposition}{Propositions}
\newtheoremstyle{Fancydefinition}
{\topsep}   
{\topsep}   
{\normalfont}  
{0pt}       
{\bfseries} 
{}         
{5pt plus 1pt minus 1pt} 
{\thmname{#1} \thmnumber{#2}. \thmnote{\normalfont\bfseries#3.}}
\theoremstyle{Fancydefinition}
\newtheorem{defn}[lem]{Definition}
\newtheorem{rem}{Remark}
\newtheorem{asmp}{Assumption}
\crefname{defn}{Defn.}{Defns.}
\Crefname{defn}{Definition}{Definitions}
\crefname{example}{Ex.}{Exs.}
\Crefname{example}{Example}{Examples}
\crefname{xca}{Ex.}{Exs.}
\Crefname{xca}{Exercise}{Exercises}
\crefname{rem}{Rem.}{Rems.}
\Crefname{rem}{Remark}{Remarks}
\crefname{asmp}{Asmp.}{Asmps.}
\Crefname{asmp}{Assumption}{Assumptions}
\crefname{section}{Sec.}{Secs.}
\Crefname{section}{Section}{Sections}
\numberwithin{equation}{section}
\numberwithin{figure}{section}
\newcommand{\eps}{\epsilon}
\newcommand{\del}{\delta}
\newcommand{\hx}{\hat{x}}
\newcommand{\vj}{\vec{j}}
\newcommand{\seq}[1]{{\p{#1_1,#1_2,\ldots}}}
\begin{document}

\title{More Iterations per Second, Same Quality -- Why Asynchronous Algorithms may Drastically Outperform Traditional Ones}

\author{Robert Hannah\footnote{Email: \href{mailto:RobertHannah89@math.ucla.edu}{RobertHannah89@math.ucla.edu}} }
\author{Wotao Yin\footnote{Email: \href{mailto:wotaoyin@math.ucla.edu}{wotaoyin@math.ucla.edu}}}
\affil{Department of Mathematics, University of California, Los Angeles, CA 90095, USA}

\date{\today}
\maketitle

\begin{abstract}
In this paper, we consider the convergence of a very general asynchronous-parallel algorithm called ARock \cite{PengXuYanYin2016_arock}, that takes many well-known asynchronous algorithms as special cases (gradient descent, proximal gradient, Douglas Rachford, ADMM, etc.). In asynchronous-parallel algorithms, the computing nodes simply use the most recent information that they have access to, instead of waiting for a full update from all nodes in the system. This means that nodes do not have to waste time waiting for information, which can be a major bottleneck, especially in distributed systems. When the system has $p$ nodes, asynchronous algorithms may complete $\Theta(\ln\p{p})$ more iterations than synchronous algorithms in a given time period (``more iterations per second'').

Although asynchronous algorithms may compute more iterations per second, there is error associated with using outdated information. How many more iterations in total are needed to compensate for this error is still an open question. The main results of this paper aim to answer this question. We prove, loosely, that as the size of the problem becomes large, the number of additional iterations that asynchronous algorithms need becomes negligible compared to the total number (``same quality'' of the iterations). Taking these facts together, our results provide solid evidence of the potential of asynchronous algorithms to vastly speed up certain distributed computations.
\end{abstract}

\section{Introduction} \label{sec:Introduction}
Designing efficient algorithms to solve large-scale optimization problem is an increasingly important area of research. However parallel algorithms are more challenging to analyze and implement because there is a host of additional considerations and issues that only arise in parallel settings.

The vast majority of parallel algorithms are synchronous. At each iteration, all processors will compute an update, and then share this update with all others. The next iteration can proceed only when all processors are finished computing an update. This synchronization can be extremely expensive at scale or on a congested network. Network latency, packet loss, loss of a node, unexpected drains on computational resources that affect even one node will cause the entire system to slow down. Asynchronous algorithms overcome the problem of synchronization by simply computing their next update with the most recent information they have available. 
Though this will eliminate synchronization penalty, there is a slowdown or penalty associated with using outdated information. It is not immediately clear whether asynchronous versions of algorithms will be faster, or will even converge to a solution.

In this paper, we examine the theoretical performance of asynchronous algorithms compared to traditional synchronous ones. We do this by analyzing a very general asynchronous-parallel algorithm called ARock \cite{PengXuYanYin2016_arock}. ARock takes many popular algorithms as special cases, such as asynchronous block gradient descent, forward backward, proximal point, etc. Hence our results will apply to all of these algorithms.

\subsection{Main argument} \label{ssec:Main-argument}
This paper aims to provide a solid theoretical evidence that asynchronous algorithms will drastically outperform synchronous ones at scale under a wide range of scenarios that ARock encompasses. These include gradient descent, forward backward, etc. for strongly convex objectives with Lipschitz gradient, or any other algorithm that can be written in the form of a block fixed-point algorithm on a contractive operator. Our argument involves a series of steps, that together will bolster our conclusion. 

\begin{enumerate}
\item We first argue that synchronous algorithms have significant synchronization penalty with scale under a well-justified model. That is, as the number of processing nodes $p$ increases, a larger and larger portion time will be spend waiting instead of computing. In fact, it will be shown that synchronous algorithms progress at least $\Theta(\ln(p))$ fewer epochs\footnote{We measure the iteration complexity in terms of \textbf{epochs}. This is a context-dependent unit of computation which loosely correspond to one computation of $Sx$ for some operator $S$ and vector $x$, e.g., the calculation of a full gradient for gradient descent.} per second than asynchronous algorithms.

\item It has always been plausible that asynchronous algorithms may progress more epochs per second. However, it has always been an open question whether an increased number of iterations per second is worth a potential iteration complexity penalty for using outdated information. In this paper, we provide a surprising answer: the iteration complexity of ARock is asymptotically the same as the corresponding synchronous algorithm, even under certain kinds of unbounded delays.

\item Since asynchronous algorithms allow for far more iterations per second (more iterations), and these iterations make the nearly same progress as synchronous ones \emph{per iteration} (same quality), asynchronous algorithms may drastically outperform synchronous ones in large-scale applications. Since ARock is extremely general, this argument applies to wide variety of asynchronous algorithms, such as block gradient descent, proximal gradient, Douglas-Rachford, etc.

\end{enumerate}

The remainder of this section will introduce the general setting, notation, and background for our results. We will discuss related work in \Cref{ssec:Related-work}.

\subsection{Fixed-point algorithms, and their generality}
ARock is a fixed-point algorithm, which allows it to be very general. This is because most optimization problems and algorithms can be written in the fixed-point form. Many popular algorithms such as asynchronous block gradient descent, forward backward, proximal point, etc. are special cases of ARock. They differ only in their choice of fixed-point operator $T$ (see \cite{HannahYin2016_unbounded} for a list of applications and special cases). Hence our results will apply to all of these algorithms.

Take an operator $T:\HH\to\HH$ with Lipschitz constant $0<r\leq1$. Such an operator is called nonexpansive. The aim is to find a \textbf{fixed-point} of this operator: That is, a point $x^*\in\HH$ such that $Tx^*=x^*$. For example, smooth minimization of a convex function $f:\HH\to\RR$ with $L$-Lipschitz gradient $\nabla f$ is equivalent to finding a fixed point of the nonexpansive operator $T=I-\gamma\nabla f$, where $I$ is the identity, and $0<\gamma\leq\frac{2}{L}$. The set of fixed points of an operator $T$ is denoted $\text{Fix}(T)$. In this paper, we consider the case where $0<r<1$, that is, $T$ is contractive, since this leads to linear convergence\footnote{The case of $r=1$ was considered in \cite{HannahYin2016_unbounded}.}.

The most common fixed-point algorithm is the Krasnosel'ski\u{\i}-Mann (KM) algorithm. ARock is essentially an asynchronous block-coordinate version of KM iteration.

\begin{defn}[Krasnosel'ski\u{\i}-Mann algorithm]
Let $\eps>0$, and $\eta^{k}$ be a series of step lengths in $\p{\eps,1-\eps}$. Let $T$ be a nonexpansive operator with at least one fixed point, and
\begin{align}\label{eq:S_def}
S &:= I-T.
\end{align}
Staring from $x^0$, the KM Algorithm is defined by the following:
\begin{align}
\begin{aligned}
x^{k+1} &= x^{k}-\eta^{k}S(x^{k})
\end{aligned}
\end{align}
\end{defn}

\begin{rem}
\textbf{Gradient descent} is equivalent to the KM algorithm with $T=I-\gamma\nabla f$. 
If the fixed-point framework is unfamiliar, it may be helpful to mentally replace $S$ with $\gamma\nabla f$, and view ARock as simply asynchronous block gradient descent.

\end{rem}

\subsection{The ARock algorithm}
ARock is an asynchronous-parallel, block, fixed-point algorithm, in which a shared solution vector $x^k$ is updated by a collection of $p$ computing nodes.

Take a space  $\HH$  on which to solve an optimization problem. $\HH$  can be the real space $\RR^N$ or a separable Hilbert space. Break this space into $m$ orthogonal subspaces: $\HH = \HH_{1} \times\ldots\times\HH_{m}$ so that vectors $x\in\HH$ can be written as $(x_1,x_2,\ldots,x_m)$  where each $x_i$  is $x$'s component in subspace $\HH_i$. Take an operator $T:\HH\to\HH$ that is $r$-Lipschitz for $0<r<1$. Let $S=I-T$ and $Sx=\p{S_{1}x,\ldots,S_{m}x}$ where $S_{j}x$ denotes the $j$'th block of $Sx$.

\begin{rem}[Conventions]
\emph{Superscripts} will denote the iteration
number of a sequence of points $x^0,x^1,x^2,\ldots$. \emph{Subscripts} will denote different blocks of a vector or operator, e.g., $x=(x_1,x_2,\ldots,x_m)$ and $Sx=\p{S_{1}x,\ldots,S_{m}x}$. {For instance, $x^k_l$ is the $l$th block of the $k$th iterate ($x^k$). $S_l x^k$ is the $l$th block of $S(x^k)$.}

\end{rem}

\begin{defn}[The ARock Algorithm]\label{def:ARock}
Let $\eta^{k}\in\RR$ be a series of step lengths and $i(k)\in\{1,\ldots,m\}$ be a series of block indices. Let $T$ be a nonexpansive operator with at least one fixed point $x^*$, and $S=I-T$. Take a starting point $x^0\in\HH$. Then the ARock algorithm \cite{PengXuYanYin2016_arock} is defined
via the iteration:

\begin{equation}\label{eq:def:ARock}
\mbox{for}~i=1,\ldots,m,\quad x_i^{k+1}  \gets
\begin{cases}
x_i^{k}-\eta^{k}S_{i}(\hx^{k}),& i=i\p{k},\\
x_i^{k},& i\neq i\p{k},
\end{cases}
\end{equation}
where the \textbf{delayed iterate} $\hx^{k}$ represents a possibly outdated version of the iteration vector $x^k$ used to make an update and the
\textbf{block index sequence} $i(k)$ specifies which block of $x^k$ is being updated to produce the next iterate $x^{k+1}$.

\end{defn}

The ARock algorithm resembles a block KM iteration. However we use a delayed iterate $\hx^k$ because of asynchronicity. In \Cref{ssec:Setup} we precisely define the block sequence $i(k)$, and the delayed iterate $\hx^k$. 

\subsection{Setup} \label{ssec:Setup}
We work with a probability measure space: $(\Omega,\Sigma,\PP)$\footnote{$\Omega$ is the probability space, $\Sigma$ is a sigma algebra, and $\PP$ is a corresponding probability measure.}. We now describe the delayed iterates (which is part of our model of asynchronicity) and the block index.

\subsubsection{Delayed iterates} \label{sec:Delayed-iterates}
Let $\vec{j} = (j_1,\ldots,j_m) \in \NN^m$ be a vector, and $x^0,x^1,x^2,\ldots$ a series of iterates. To model outdated solution data, we find it convenient to define:
\begin{align}
x^{k-\vec{j}} & = \p{x_1^{k-j_{1}},x_2^{k-j_{2}},\ldots,x_m^{k-j_{m}}}.
\end{align}
Hence we define a series of \textbf{delay vectors} $\vec{j}(0), \vec{j}(1),\vec{j}(2),\ldots$ in $\NN^m$, corresponding to $x^0, x^1, x^2,\ldots$ respectively. The components of the delay vector $\vec{j}\p{k} = \p{j\p{k,1},j\p{k,2},\ldots,j\p{k,m}}$ represent the staleness of the components of the solution vector $x^k$.
%
\begin{defn}[Delayed iterate]
The delayed iterate $\hx^k$ is defined as\footnote{\textbf{Stronger asynchronicity:} It is possible to have more general asynchronicity, where different components of the \textit{same block}, $x_l\in\HH_l$, have different ages. This leads to similar results, and a similar proof, but the current setup was chosen for simplicity.}:
\begin{align}
\hx^k &= x^{k-\vec{j}\p{k}},~\mbox{or equivalently,}\\
\hx^k= \p{\hx_{1}^{k},\hx_{2}^{k},\ldots,\hx_{m}^{k}} &= \p{x_{1}^{k-j\p{k,1}},x_{2}^{k-j\p{k,2}},\ldots,x_{m}^{k-j\p{k,m}}}.
\end{align}
\end{defn}
%
Lastly, we define the current delay $j(k)$ as follows\footnote{Notice the lack of a vector symbol: This distinguishes the current delay from the delay vector.}:
\begin{align}\label{eq:def:current-delay}
j\p{k} &= \max_{i}\cp{ j\p{k,i}}.
\end{align}
These delay vectors 
depend on the model of asynchronicity chosen. We consider two possibilities in this paper: stochastic and deterministic delays.

\subsubsection{Block sequence}\label{sec:Block-sequence}
We define following filtrations to represent the information that is accumulated as the algorithm runs.
\begin{align}
\cF^{k} & = \sigma\p{x^{0},x^{1},\ldots,x^{k},\vec{j}\p{0},\vec{j}\p{1},\ldots,\vec{j}\p{k}}\\
\cG^{k} & = \sigma\p{x^{0},x^{1},\ldots,x^{k}}
\end{align}
Here $\sigma(a,b,c,\ldots)$ represents the sigma algebra generated by $a,b,c,\ldots$.

\begin{asmp}[IID block sequence] \label{asmp:Block-sequence}
The sequence in which blocks of the solution vector are updated, $i(k)$, is a series of uniform\footnote{Nonuniform probabilities are a simple extension. However for simplicity we assume a uniform distribution.} IID random variables that takes values $1,2,\ldots,m$ each with probability $1/m$. $i(k)$ is independent of $\cF^k$. That is, $i(k)$ is independent of the sequence of iterates $(x^0,x^1,\ldots,x^k)$ and the sequence of delays $(\vec{j}\p{0},\vec{j}\p{1},\ldots,\vec{j}\p{k})$ jointly\footnote{Clearly this makes $i(k)$ independent of $\cG^k$ as well.}.
\end{asmp}
It is very difficult to remove the assumption that the block sequence is independent of the sequence of delays. Only a few papers that we are aware of make progress in eliminating this assumption \cite{SunHannahYin2017_asynchronous,LeblondPedregosaLacoste-Julien2017_asaga,CannelliFacchineiKungurtsevScutari2017_asynchronous}. However obtaining good convergence rates remains elusive.

Also removing the assumption of a random block sequence, and assuming, say, a cyclic choice as in \cite{SunYe2016_worstcase,ChowWuYin2017_cyclic} leads to at least an $m$-times slowdown of the algorithm in the worst case for smooth minimization \cite{SunYe2016_worstcase}. The block sequence will be IID if we allow all nodes to randomly update any block chosen in a uniform IID fashion, and computing each block is of equal difficulty. Future work may involve finding an intermediate scenarios between IID and cyclic block choices that still results in adequate rates.

\subsection{Lyapunov functions} \label{ssec:Lyapunov-functions}
In this framework, it is easy to generate an example where we have conditional expectation bound:
\begin{align}
\EE\sp{\n{x^{k+1}-x^*}^2 \big| \sigma\p{x^{0},x^{1},\ldots,x^{k},\vec{j}\p{0},\vec{j}\p{1},\ldots,\vec{j}\p{k}}} > \n{x^k-x^*}^2
\end{align}
for any nonzero step size. However usually some kind of monotonicity is necessary to prove convergence, especially linear convergence. In \cite{HannahYin2016_unbounded}, following on from \cite{PengXuYanYin2016_arock}, the authors propose an \textbf{asynchronicity error} term to add to the classical error:
\begin{align}\label{eq:def:Xi}
\underbrace{\xi^{k}}_{\text{Total error}} &= \underbrace{\n{ x^{k}-x^{*}} ^{2}}_{\text{Classical error}}+\underbrace{\frac{1}{m}\sum_{i=1}^{\infty}c_{i}\n{ x^{k+1-i}-x^{k-i}} ^{2}}_{\text{Asynchronicity error}}
\end{align}
for positive decreasing coefficients $(c_1,c_2,\ldots)$. Using carefully chosen coefficients and step size, they are able to prove convergence of ARock under unbounded delay for $T$ with Lipschitz constant $r=1$. This Lyapunov function appears naturally in the proof, and much like a well chosen basis in linear algebra, it seems to be the most natural error to consider when proving convergence. Refer to \cite{HannahYin2016_unbounded} for further motivation, and the general strategy for generating useful Lyapunov functions.

We use the same Lyapunov function in this paper to derive the main results. However our choice of coefficients is drastically different. Choosing the coefficients that yield strong results is a very involved process 
and is part of the technical innovation of this paper.

\subsection{Structure of the paper}
In \Cref{sec:New-results}, we systematically work through the points of the main argument in \Cref{ssec:Main-argument}, which bolsters our main thesis that asynchronous algorithms are drastically faster at scale, and discuss related work in \Cref{ssec:Related-work}. This culminates in our most important contribution: \Cref{thm:Linear-convergence-stochastic-delays-intro}, which essentially proves that asynchronous ARock has the same iteration complexity as its synchronous counterpart. This result is proven in \Cref{sec:Proof-for-Stochastic-Delays}, and \Cref{sec:Proof-for-Deterministic-Delays} introduces and proves a similar result for deterministic unbounded delays.

\section{New results} \label{sec:New-results}
In this section, we present our main results, that justify the proposition in our main argument in \Cref{ssec:Main-argument}. First we describe the implementation setup in \Cref{ssec:Implementation-setup}, that is, the kind of context where our main argument applies. Next in \Cref{ssec:Synchronization-penalty}, we describe some factors that cause synchronous algorithms to perform fewer epoch in a given time period. For instance, we prove that as the number of nodes increases, under a reasonable model synchronous algorithms suffer a $\Theta(\ln(p))$ slowdown, whereas asynchronous algorithms suffer no such penalty (more iterations). In \Cref{ssec:Convergence-rate-synchronous}, we derive a sharp convergence rate for synchronous-parallel ARock, so that we can compare iteration complexities. This appears to be a new result, with many implications in parallel optimization. In \Cref{ssec:Main-results}, we present our main results: The iteration complexity of (asynchronous) ARock is essentially the same as its synchronous counterpart. This result is our main theoretical contribution. The other subsections are independent contributions whose role is to justify our argument and bolster the important of the main theorems. Finally in \Cref{ssec:Related-work}, we discuss related work and compare results.

\subsection{Implementation setup} \label{ssec:Implementation-setup}
We now describe the implementation setup that we will be considering. The results and analysis may be more general than this setup, but being concrete about the setup allows us to compare synchronous vs. asynchronous implementations of the same algorithm. Our implementation setup will consist of:
\begin{enumerate}
\item \textbf{Central Parameter Server:} This is a central node that will maintain the solution vector $x$, and apply updates that are supplied to it\footnote{We could have considered a shared memory architecture, however we chose this setting since we are more focused on algorithm performance at scale.}: $x\leftarrow x - \eta S_i \hat{x}$.
\item \textbf{Computing Nodes:} There are $p$ nodes that read the solution vector $\hat{x}$ into local memory, calculate $S_i \hat{x}$, and send this update back to the central server.
\end{enumerate}

We now compare and contrast how a synchronous and asynchronous version of ARock would function:

\begin{itemize}
\item \textbf{Synchronous-parallel:} All computing nodes read the same solution vector $x$ from the server. All computing nodes will compute an update $S_i x$, where $i$ depends on the node, and send this update to the server. The server will save all received updates and, only after $x$ has been read by every node, apply the received updates to the solution vector. After this, all computing nodes will then read the same updated solution vector again, and the process repeats.
\item \textbf{Asynchronous-parallel:} All computing nodes will continually read the central solution vector $x$ into their local memory. This yields $\hat{x}$, a potentially inconsistently read solution vector. Each node will then calculate an update $S_i \hat{x}$ and send it back to the server. The server will apply any updates to the solution vector as they arrive.
\end{itemize}

Notice that in the synchronous implementation, a single slow node can hold the entire system up. In the asynchronous implementation, all nodes function independently, and never wait for any other node. The central server does not wait to receive an update from every node, but simply applies them as they come. We now analyze the synchronization penalty of these two implementation setups.

\subsection{Synchronization penalty} \label{ssec:Synchronization-penalty}
In this section we discuess point 2 of the main argument in \Cref{ssec:Main-argument}. We consider a simple and well-justified model of our implementation setup to investigate synchronization penalty. Even under perfect load balancing, this model will imply a slowdown for synchronous algorithms that increases as $\Theta(\ln(p))$, where $p$ is the number of processors. This may be even worse in a real implementation, where other factors may further disadvantage synchronous algorithms (such as the overhead associated with message passing and read/write locks). For the corresponding asynchronous algorithm, we show these problems do not occur. Hence asynchronous algorithms may compute far more iterations per second. 

Following \cite{SerpedinChaudhari2009_synchronization} (pp. 43) and adding modifications, we model the time taken for node $l$'s update as follows:
\begin{align}
P_l = R_l + C_l(i_l,m) + S_l.
\end{align}
Here $i_l$ is the block of the solution vector that node $l$ updates, and $C_l(i,m)$ represents the ``predictable'' portion of the update time (it is merely a function, not a random variable). This includes computation time, and the delay because of the limited bandwidth of the network. $C_l(i,m)$ is a function of $i$ because different blocks may have different sizes and difficulties. $R_l$ is the random delay involved in receiving the solution vector from the central server, and $S_l$ is the random delay involved in sending an update back to the server. $R_l$ and $S_l$ are assumed IID with exponential distribution of mean $\lambda_l$. This exponential model for the random portion of the delay has extensive theoretical and empirical justifications (see \cite{SerpedinChaudhari2009_synchronization}, pp. 44-45 for a discussion of the evidence).

\subsubsection{Synchronous algorithms and random delays}
Let's first consider the effect of random delays on the synchronization penalty. For simplicity, assume that $C_l(i,m)$ is constant over $i$ and $l$, and hence we can write this function as $C(m)$. Also assume we have $\lambda_l=\lambda$ for all $l$. This situation would occur if all blocks were of equal difficulty to update, and all nodes had the same computational power and network delay distribution. This is the ideal scenario, and yet we will observe a growing synchronization penalty with scale.

Because all nodes must finish updating for the next iteration to start, the iteration time $P$ is given by:
\begin{align}
P=C(m)+\max_{l=1,2,\ldots,p}\cp{R_l+S_l}.
\end{align}
Hence we have (using \cite{Eisenberg2008_expectation}):
\begin{align*}
\EE P-C(m) & \geq \EE \p{\max_{l=1,\ldots,p}\{R_{l}\}} =\lambda\sum_{l=1}^{p}\frac{1}{l} \geq \lambda\ln\p p.
\end{align*}

Let's now look at the time $\mathcal{T}(K)$ required for $K$ epochs, which corresponds to $Km/p$ iterations. Let\footnote{We write $A\sim B$ for random variables $A$ and $B$ if these variables have the same distribution.} $P^1,P^2,\ldots \sim P$. Then:
\begin{align*}
\mathcal{T}(K) &= \sum_{k=1}^{\lceil Km/p\rceil} P^k,\\
\EE \mathcal{T}(K) & \geq (Km/p) \EE P\\
&\geq (Km/p)\p{C(m)+\lambda\ln\p{p}}.
\end{align*}
Hence for small values of $p$, the expected time to reach $K$ epochs will decrease linearly with the number of nodes $p$. However as $p$ becomes larger, there is at least a $\Theta(\ln(p))$ penalty in how long this will take compared to a linear speedup.
%
\subsubsection{Asynchronous algorithms}
Using the same model, we now show that asynchronous algorithms have no such $\Theta(\ln(p))$ scaling penalty. The time taken for node $l$ to complete $k$ iterations is given by:
\begin{align}
S^k_l &= \sum_{j=1}^k P_l^j
\end{align}
where $P_l^k\sim P$. This is actually a \textbf{renewal process} with \textbf{interarrival time} $P_l$ (see \cite{MitovOmey2014_renewal,KellaStadje2006_superposition}). However if you consider the total number of iterations completed by all nodes together, then the time of the $k$'th iteration $S^k$ is known as a \textbf{superposition of renewal processes}. From \cite{KellaStadje2006_superposition} 1.4, as $k\to\infty$ we have:
\begin{align}
\frac{\EE S^k}{k} &\to \frac{\EE P}{p},\\
\text{(by convergence in the previus step) }\EE S^k &= k\frac{\p{C(m)+2\lambda}}{p}(1+o_{m,\lambda,p}\p{1}).
\end{align}
%
\begin{rem}[Notation]
The subscripts in $o_{m,\lambda,p}\p{1}$ denote that this term converges to $0$ as $k\to\infty$ in a way that depends on $m$, $p$, and $\lambda$.

\end{rem}

Hence the expected time to complete $K$ epochs is given by:
\begin{align}
\EE \mathcal{T}(K) &= \frac{Km}{p}\p{C(m)+2\lambda}(1+o_{m,\lambda,p}\p{1})
\end{align}
as $K\to\infty$. Hence it can be seen that asynchronous algorithms do not have a $\ln(p)$ penalty as $p$ becomes larger, when $K$ is sufficiently large. Hence for large $K$, asynchronous algorithms will compute at least $\Theta(\ln\p p)$ more epochs per second than synchronous algorithms.

\subsubsection{Heterogeneity and synchronous algorithms}
Sometimes a parallel problem \textit{cannot} be split into $m$ blocks in a way that updating each block is of equal difficulty (as was previously assumed in this subsection). This can cause significant synchronization penalty in the synchronous case, but has no such effect on asynchronous algorithms because computing nodes do not have to wait for slower nodes or blocks to complete.

Let us assume for the moment that there is no random component of the update time for a single node, and that all nodes have the same computational power. This means that the update time for node $l$ at iteration $k$ is simply:
\begin{align}
P^k_l &= C(i_l,m)
\end{align}
where $i_l$ is the block that node $l$ updates at iteration $k$. Assume also that at every iteration, each node $l$ will chose a random block to update, and hence $i_l$ is a uniform random variable on $\cp{1,2,\ldots,m}$. For the synchronous algorithm, we have an update time:
\begin{align}
P &= \max_{l=1,2,\ldots,p}\cp{C(i_l,m)}
\end{align}
Clearly then, as $p$ increases, we have:
\begin{align}
\EE P &\to \max_i C(i,m)
\end{align}
That is, the update time is determined by the most difficult block to update. Hence the the expected time for $K$ epochs is:
\begin{align}
\EE \mathcal{T}(K) &= \frac{Km}{p}\p{\max_i C(i,m) + o_{m}(1)}
\end{align}
as $p\to\infty$.

\subsubsection{Heterogeneity and asynchronous algorithms}
Now consider an asynchronous algorithm. The update time of a single node is:
\begin{align}
\EE P &= \EE C(i_l,m) = \frac{1}{m}\sum_{i=1}^{m}C(i,m)
\end{align}
Yet again we have a superposition of renewal processes, and hence from \cite{KellaStadje2006_superposition}, we have as $k\to\infty$:
\begin{align}
\frac{\EE S^{k}}{k} & \to\frac{\EE P}{p}\\
\EE S^{k} & =\frac{k}{p}\p{\frac{1}{m}\sum_{i=1}^{m}C(i,m)}\p{1+o_{m,p}\p 1}
\end{align}
Hence the expected time for $K$ epochs is given by:
\begin{align}
\EE\mathcal{T}\p K & =\frac{Km}{p}\p{\frac{1}{m}\sum_{i=1}^{m}C(i,m)}\p{1+o_{m,p}\p 1}
\end{align}
as $K\to\infty$. Notice that the time taken for an asynchronous algorithm is determined by the average difficulty of updating a block. Compare this to synchronous algorithms where the most difficult block determines the time complexity. If the difficulty of blocks is highly heterogeneous, asynchronous algorithms may complete far more iterations per second, even without considering network effects.

\subsubsection{Additional factors}
In the previous, we merely gave an analysis of a couple factors that decrease the number of epochs that synchronous solvers complete. Another factors is heterogeneous computing power of the computing nodes themselves, which causes disadvantages even if all blocks are the same difficulty. Also there is significant overhead associated with enforcing synchronization, as well as read and write locks.

\subsection{Iteration complexity for synchronous Block KM} \label{ssec:Convergence-rate-synchronous}
In this subsection, we start to look at point 2 of the main argument in \Cref{ssec:Main-argument}. In order to prove there is no iteration complexity penalty, we need to obtain tight rates of convergence for synchronous ARock (which is merely a synchronous block KM iteration). At every step, each of the $p$ processing nodes are given a random block to update with a KM-style iteration. Hence we have:
\begin{align}
x^{k+1} & =x^{k}-\eta^{k}P^{k}Sx^{k}\label{eq:def:KM-random-subset}
\end{align}
Here $P^{k}$ is a projection onto a random subset of $\cp{1,2,\ldots,m}$ of size $p$ (we assume $p\leq m$). We note that each block has a $\frac{p}{m}$ probability
of being updated on a given iteration.
%
\begin{defn}[Convergence rate]
An algorithm is said to linearly converge if the error $E(k)=\cO\p{R^k}$ for $0<R<1$. $R$ is called the \textbf{convergence rate}.

\end{defn}

\begin{defn}[Epoch iteration complexity]
The epoch iteration complexity $I(\eps)$ is the number of \textbf{epochs} required to decrease the error below $\eps E(0)$, where $E(0)$ is the initial error.
\end{defn}

This error could be the distance from the solution $\n{x^k-x^*}^2$ or the gap between the function value and its optimal value $f(x^k)-f^*$.

\begin{prop}[Convergence rate of block KM iterations] \label{prop:Block-KM-rates}
Let $T$ be an $r$-Lipschitz operator for $0<r<1$. Consider the random subset KM iteration defined in \Cref{eq:def:KM-random-subset} for $1\leq p\leq m$. A step size of $\eta^{k}=1$
optimizes the convergence rate. When this optimal step size is chosen,
we have convergence rate:
\begin{align*}
R & =1-\frac{p}{m}\p{1-r^{2}}
\end{align*}
and corresponding epoch iteration complexity:
\begin{align}
I(\eps) & =\p{\frac{1}{1-r^{2}}-\theta\frac{p}{m}}\ln\p{1/\eps}\label{eq:Optimal-rate-random-subset-KM}
\end{align}
for some $\theta\in\sp{\frac{1}{2},1}$.

\end{prop}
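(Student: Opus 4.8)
The plan is to establish a one-step expected contraction in the natural Lyapunov function $\n{x^k-x^*}^2$, optimize the step size inside that bound, and then translate the resulting linear rate into an epoch count.

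\textbf{One-step contraction.} First I would condition on $\cG^k$. Since $Sx^k$ is $\cG^k$-measurable while $P^k$ is (independent of $\cG^k$ and) a projection onto a uniformly random index set of size $p$, every block is retained with probability $p/m$, so $\EE\sp{P^k y \mid \cG^k} = \tfrac pm y$ and $\EE\sp{\n{P^k y}^2 \mid \cG^k} = \tfrac pm\n{y}^2$ for $\cG^k$-measurable $y$ (no independence of coordinates is needed, only the marginals $\PP(i\in\text{set})=p/m$). Expanding $\n{x^{k+1}-x^*}^2=\n{x^k-x^*-\eta^kP^kSx^k}^2$ and taking conditional expectation gives
\begin{align*}
\EE\sp{\n{x^{k+1}-x^*}^2 \mid \cG^k} = \n{x^k-x^*}^2 - \frac pm\p{2\eta^k\dotp{x^k-x^*}{Sx^k} - (\eta^k)^2\n{Sx^k}^2}.
\end{align*}
Because $Tx^*=x^*$ and $T$ is $r$-Lipschitz, $\n{(I-S)x^k-x^*}^2\le r^2\n{x^k-x^*}^2$; expanding the left-hand side yields $2\dotp{x^k-x^*}{Sx^k}-\n{Sx^k}^2\ge(1-r^2)\n{x^k-x^*}^2$. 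Substituting $\eta^k=1$ into the display and using this bound gives $\EE\sp{\n{x^{k+1}-x^*}^2\mid\cG^k}\le R\,\n{x^k-x^*}^2$ with $R=1-\tfrac pm(1-r^2)$; iterating and taking total expectations yields $\EE\n{x^k-x^*}^2\le R^k\n{x^0-x^*}^2$.

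\textbf{Optimality of $\eta^k=1$ and sharpness.} To see that no constant (hence no time-varying, since the bound is per-iteration and worst-case) step size does better, fix $u:=x^k-x^*$ and write $Sx^k=u-w$ with $\n{w}\le r\n{u}$; the worst-case factor for a given $\eta$ is $1-\tfrac pm$ times $\inf_w\p{2\eta\dotp{u}{u-w}-\eta^2\n{u-w}^2}$. Parametrizing by $\dotp{u}{w}$ and $\n{w}^2$, one checks the infimum is attained at a collinear $w=\pm r u$, reducing to a one-dimensional optimization that shows this infimum, as a function of $\eta$, is maximized exactly at $\eta=1$, where it equals $(1-r^2)\n{u}^2$. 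Hence $\eta^k=1$ optimizes the rate. Sharpness of $R$ is witnessed by the linear map $T=rI$ with $x^*=0$, for which $\EE\sp{\n{x^{k+1}}^2\mid\cG^k}=R\n{x^k}^2$ with equality.

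\textbf{Epoch iteration complexity.} From $\EE\n{x^k-x^*}^2\le R^k\n{x^0-x^*}^2$, the error falls below $\eps E(0)$ after $\ln(1/\eps)/(-\ln R)$ iterations, i.e. after $I(\eps)=\tfrac pm\cdot\ln(1/\eps)/(-\ln R)$ epochs, one epoch being $m/p$ iterations. Setting $w:=\tfrac pm(1-r^2)\in(0,1)$ and \emph{defining} $\theta$ by $\tfrac1{1-r^2}-\theta\tfrac pm=\tfrac pm/(-\ln(1-w))$ gives $\theta=\tfrac1w+\tfrac1{\ln(1-w)}$; it then remains to verify $\theta\in\sp{\tfrac12,1}$, which follows from the two-sided bound $\tfrac{2w}{2-w}\le-\ln(1-w)\le\tfrac{w}{1-w}$ on $(0,1)$ — each half reducing, after the substitution $v=1/(1-w)$ or a one-line derivative comparison, to the elementary inequality $\ln v\le v-1$.

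\textbf{Main obstacle.} The expansion in the first step and the logarithmic estimates in the last are routine; the part requiring genuine care is the second step — correctly describing the feasible set of $\p{\dotp{u}{w},\n{w}^2}$ and showing the worst case occurs at a collinear $w$, so that the step-size optimization legitimately collapses to a scalar problem whose maximizer is $\eta=1$.
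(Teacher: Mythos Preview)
Your proposal is correct and follows essentially the same route as the paper: the one-step expansion and use of the contraction inequality are identical, your collinear worst-case reduction $w=\pm ru$ is exactly the paper's pair of tight examples $T=\pm rI$ (the paper phrases the two cases via $(1-r)$-strong monotonicity and $(1+r)$-Lipschitzness of $S$ rather than by direct minimization over $w$, but the content is the same), and your logarithmic bounds $\tfrac{2w}{2-w}\le-\ln(1-w)\le\tfrac{w}{1-w}$ are precisely the paper's inequality $1-x\le\tfrac{-x}{\ln(1-x)}\le1-\tfrac{x}{2}$ rewritten.
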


For problems of interest, the first term ($1/(1-r^2)$) will dominate the second. We are interested in huge-scale problems, which will usually have $m\gg p$ or $r\approx 1$.

This is proven in \Cref{ssec:Block-KM-Iterations}. Hence if we have either $r\to 1$ or $p/m\to 0$, then:
\begin{align}
I &= \p{1+o(1)}\frac{1}{1-r^2}\ln\p{1/\eps}
\end{align}
We will eventually prove that ARock has essentially the same iteration complexity.

This result allows us to obtain a sharp convergence rate and epoch iteration complexity for synchronous-parallel block gradient descent (of which block gradient descent is a special case of $p=1$). This appears to be a new result that extends recent work in \cite{TaylorHendrickxGlineur2017_exact} to the case of random-subset block gradient descent (which corresponds to the special case $m=1$, $p=1$).

\begin{cor}[Sharp Convergence Rate of Synchronous-Parallel Block Gradient Descent]\label{cor:BCD-synchronous-rates}
Let $f$ be $\mu$-strongly convex function with $L$-Lipcshitz gradient $\nabla f(x)$. Let $\kappa=L/\mu$ be the condition number. The operator $T=I-\frac{2}{\mu+L}\nabla f$ is $r=\p{1-\frac{2}{\kappa+1}}$ Lipschitz. The corresponding block KM iteration \Cref{eq:def:KM-random-subset} with optimal step size $\eta^k=1$ is equivalent to synchronous-parallel block gradient descent with step size $2/\p{\mu+L}$. The linear convergence rate $R$ and epoch iteration complexity $I(\eps)$ with respect to the error $\n{x^k-x^*}^2$ are given by the following:
\begin{alignat}{3}
R & =1-4\frac{p}{m}\frac{\kappa}{\p{\kappa+1}^{2}} && =1-4\frac{p}{m\kappa}\p{1+\cO\p{1/\kappa}}\label{eq:Convergence-rate-gradient-sharp}\\
I\p{\eps} & =\frac{1}{4}\p{\kappa+\cO\p 1}\ln\p{1/\eps}&&\label{eq:Iteration-complexity-gradient-sharp}
\end{alignat}
as $\kappa\to\infty$. Lastly, this convergence rate is sharp.
\end{cor}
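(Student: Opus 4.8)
The plan is to obtain the rate and complexity by specializing \Cref{prop:Block-KM-rates}, and then to establish sharpness with an explicit quadratic. First I would verify the three reductions asserted in the statement. (i) $T=I-\frac{2}{\mu+L}\nabla f$ is $r$-Lipschitz with $r=1-\frac{2}{\kappa+1}$: writing $\gamma=\frac{2}{\mu+L}$ and using the standard inequality $\dotp{\nabla f(x)-\nabla f(y),x-y}\geq\frac{\mu L}{\mu+L}\n{x-y}^2+\frac{1}{\mu+L}\n{\nabla f(x)-\nabla f(y)}^2$ (valid for $\mu$-strongly convex $f$ with $L$-Lipschitz gradient), one expands $\n{(I-\gamma\nabla f)x-(I-\gamma\nabla f)y}^2$; the identity $\gamma^2=\frac{2\gamma}{\mu+L}$ cancels the $\n{\nabla f(x)-\nabla f(y)}^2$ term and leaves coefficient $1-\frac{4\mu L}{(\mu+L)^2}=\frac{(L-\mu)^2}{(\mu+L)^2}=r^2$ on $\n{x-y}^2$. (ii) Since $S=I-T=\gamma\nabla f$, iteration \eqref{eq:def:KM-random-subset} with $\eta^k=1$ is $x^{k+1}=x^k-P^k\gamma\nabla f(x^k)$, i.e.\ synchronous-parallel block gradient descent with step $\frac{2}{\mu+L}$. (iii) \Cref{prop:Block-KM-rates} then applies with this $r$, yielding $R=1-\frac pm(1-r^2)$ and $I(\eps)=\big(\frac{1}{1-r^2}-\theta\frac pm\big)\ln(1/\eps)$ for some $\theta\in[\frac12,1]$.

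Next comes elementary algebra. From $1-r=\frac{2}{\kappa+1}$ and $1+r=\frac{2\kappa}{\kappa+1}$ we get $1-r^2=\frac{4\kappa}{(\kappa+1)^2}$, hence $R=1-\frac{4p}{m}\frac{\kappa}{(\kappa+1)^2}$; and since $\frac{\kappa}{(\kappa+1)^2}=\frac1\kappa\big(\frac{\kappa}{\kappa+1}\big)^2=\frac1\kappa(1+\cO(1/\kappa))$ this matches the second form of \eqref{eq:Convergence-rate-gradient-sharp}. For the complexity, $\frac{1}{1-r^2}=\frac{(\kappa+1)^2}{4\kappa}=\frac14(\kappa+2+\frac1\kappa)$, and since $\theta\frac pm\in(0,1]$ is $\cO(1)$, $I(\eps)=\frac14(\kappa+\cO(1))\ln(1/\eps)$ as $\kappa\to\infty$, which is \eqref{eq:Iteration-complexity-gradient-sharp}.

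For sharpness I would exhibit a function in the class on which $\EE\n{x^k-x^*}^2\le R^k\n{x^0-x^*}^2$ holds with equality. Take $\HH=(\RR^2)^m$ and $f(x)=\sum_{i=1}^m(\frac\mu2 u_i^2+\frac L2 v_i^2)$ with block $x_i=(u_i,v_i)$; this is $\mu$-strongly convex with $L$-Lipschitz gradient and has $x^*=0$. A block gradient step of size $\frac{2}{\mu+L}$ sends $x_i=(u_i,v_i)\mapsto(ru_i,-rv_i)$, so $\n{x_i^{k+1}}^2=r^2\n{x_i^k}^2$ if block $i$ is selected and $\n{x_i^{k+1}}^2=\n{x_i^k}^2$ otherwise. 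Since each block is selected with probability $\frac pm$, $\EE[\n{x_i^{k+1}}^2\mid\cG^k]=(1-\frac pm(1-r^2))\n{x_i^k}^2=R\n{x_i^k}^2$; summing over $i$ and iterating gives $\EE\n{x^k-x^*}^2=R^k\n{x^0-x^*}^2$ exactly. Optimality of the step $\frac{2}{\mu+L}$ then follows by combining the step-size optimality in \Cref{prop:Block-KM-rates} with the elementary fact that the per-block factor $\rho(\gamma)=\max(|1-\gamma\mu|,|1-\gamma L|)$ is uniquely minimized over $\gamma$ at $\gamma=\frac{2}{\mu+L}$ with value $r$, so any other step yields a strictly larger rate $1-\frac pm(1-\rho(\gamma)^2)$ on the same example (taking $x^0$ supported on the coordinate realizing the maximum).

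I expect the only step requiring real care to be sharpness: one must produce a single example that simultaneously saturates the $r$-Lipschitz bound on $T$ and the block-KM contraction bound of \Cref{prop:Block-KM-rates}. Everything else is the classical optimally-scaled-gradient-step estimate together with bookkeeping.
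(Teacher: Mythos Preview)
Your proposal is correct and, up through the rate and complexity computations, essentially identical to the paper's proof: same Nesterov coercivity inequality to get $r=1-\frac{2}{\kappa+1}$, same invocation of \Cref{prop:Block-KM-rates}, same algebra for $1-r^2=\frac{4\kappa}{(\kappa+1)^2}$ and for $I(\eps)$.

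The sharpness argument differs. The paper does not build a single mixed quadratic; instead it recalls from the proof of \Cref{prop:Block-KM-rates} that every inequality there is an equality for $T=rI$ when $\eta^k\le 1$ and for $T=-rI$ when $\eta^k\ge 1$, and then observes that these two operators are realized within the class by $f(x)=\tfrac{\mu}{2}\n{x}^2$ and $f(x)=\tfrac{L}{2}\n{x}^2$ respectively. This two-function approach certifies sharpness of the rate bound for \emph{every} $\eta^k$ simultaneously, and hence that $\eta^k=1$ is genuinely optimal over the class, by piggybacking on the internals of the earlier proof. Your single example $f(x)=\sum_i\big(\tfrac{\mu}{2}u_i^2+\tfrac{L}{2}v_i^2\big)$ is self-contained and gives the clean exact identity $\EE\n{x^k}^2=R^k\n{x^0}^2$ at $\eta^k=1$ for every $x^0$, which is arguably more transparent for sharpness of $R$ itself; you then need the separate $\rho(\gamma)=\max(|1-\gamma\mu|,|1-\gamma L|)$ argument (with $x^0$ on the worst coordinate) to recover step-size optimality. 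Both routes are valid; the paper's is shorter because it reuses work already done, while yours avoids any dependence on how \Cref{prop:Block-KM-rates} was proved.
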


This is proven in \Cref{ssec:Block-gradient-descent}. Among other things, our main results will show that asynchronous-parallel block-gradient descent has epoch iteration complexity that is asymptotically equal to $\frac{1}{4}\kappa \ln \p{1/\eps}$, which is the complexity of synchronous-parallel block coordinate descent.

\begin{rem}[Composite objectives]
Let $g(x)=\sum_{i=1}^ng\p{x_i}$ be separable, with each component convex, and subdifferentiable. The exact same convergence rate and complexity clearly holds for block proximal gradient descent. This is because the corresponding nonexpansive operator $T=\p{I+\partial g}^{-1}\circ\p{I-\frac{2}{\mu+L}\nabla f}$ is also $1-\frac{2}{\kappa+1}$-Lipschitz, and hence all preceding theory applies.

\end{rem}

\subsection{Iteration complexity for ARock} \label{ssec:Main-results}
In this subsection we present our theoretical results for ARock, which completes part 2 of the main argument in \Cref{ssec:Main-argument}. That is, ARock (and hence all its special cases) has essentially the same iteration complexity as its synchronous counterpart. However we present the results in simplified form so as to more effectively communication the main message. Full versions of these results that contain more technical details are given and proven in the proof sections.

\subsubsection{Stochastic delays}
The first result is convergence under a stochastic unbounded delays from a fixed distribution (though this can be weakened to a changing distribution).

\begin{asmp}[Stochastic unbounded delays] \label{asmp:Stochastic-delays}
The sequence of delay vectors $\vec{j}(0), \vec{j}(1), \vec{j}(2),\ldots$ is IID, and $\vj(k)$ is independent of $\cG^k$. That is, $\vj(k)$ is indepdendent of the iterates $\p{x^0,x^1,x^2,\ldots,x^k}$.
\end{asmp}

We can imagine a large optimization problem being solved on a busy network. Nodes are continually sending and receiving their updates. Traffic is chaotic and there is some kind of distribution of how long the information take to get from one node to the rest.

The delay vectors $\vj(k)$ have a fixed distribution (though this can be relaxed). Define
\begin{align}
P_l=\PP\sp{j(k)\geq l}
\end{align}
We let $\rho$ be defined by:
\begin{align}
\rho &= 1-\frac{1}{m}\p{1-r^2} \label{eq:def:rho}
\end{align}
which is the linear convergence rate of the corresponding synchronous KM algorithm (\Cref{eq:def:KM-random-subset}) with $p=1$, and ideal step size. We also define probability moments:
\begin{align}
M_1 &= \sum_{l=1}^\infty P_l\rho^{-l/2},\quad&
M_2 &= \sum_{l=1}^\infty P_l^{1/2}\rho^{-l/2}\label{eq:def:M}
\end{align}
and step size:
\begin{align}
\eta^k &=\eta_1 \triangleq \p{1+m^{-1/2}\p{\p{1-r^2}^{1/2}M_1+2M_2}} \label{eq:def:eta1-final}
\end{align}
Notice that these moments are a function of $m$. This is immediately clear because $\rho$ is a function of $m$. Also the probability distribution of the delays may depend on $m$ in a way that depends on the network and how you decide to scale up the computation to a higher number of nodes or blocks.

\begin{thm}[Linear convergence for stochastic delays] \label{thm:Linear-convergence-stochastic-delays-intro}
Let \Cref{asmp:Block-sequence} and \Cref{asmp:Stochastic-delays} hold. Let $M_1$, and $M_2$ be finite and $\cO\p{m^{q}}$ for $0\leq q < 1/2$. Let $\eta^k=\eta_1$. Then there exist positive coefficients $(c_i)_{i=1}^\infty$ of the Lyapunov function $\xi^k$ such that we have the following linear convergence rate and iteration complexity respectively:
\begin{align}
\EE\sp{\xi^{k+1}\big|\cG^k} & \leq \underbrace{\Big(1-\frac{1}{m}\p{1-r^{2}}}_{\text{Ideal synchronous rate}}+\underbrace{\cO\p{m^{q-3/2}}\Big)}_{\text{Asynchronicity penalty}}\xi^k\label{eq:Asymptotic-rate-stochastic},\\
I\p{\eps} & =\p{1+\cO\p{m^{-1/2+q}}}\p{\frac{1}{1-r^{2}}}\ln\p{1/\eps},
\end{align}
as $m\to\infty$.

\end{thm}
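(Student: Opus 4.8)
The plan is to run a Lyapunov-function analysis built on $\xi^k$ from \Cref{eq:def:Xi}, where the positivity, monotonicity, and summability of the coefficients $(c_i)$ — together with the precise value of $\eta_1$ in \Cref{eq:def:eta1-final} — are reverse-engineered to force a one-step contraction $\EE[\xi^{k+1}\mid\cG^k]\le\rho'\xi^k$ with $\rho'=1-\tfrac1m\p{1-r^2}+\cO\p{m^{q-3/2}}$. First I would expand the classical error: conditioning on $\cF^k$, averaging over the uniform IID block index (using \Cref{asmp:Block-sequence} and that $\hx^k$ is $\cF^k$-measurable) gives $\EE[\n{x^{k+1}-x^*}^2\mid\cF^k]=\n{x^k-x^*}^2-\tfrac{2\eta^k}{m}\dotp{x^k-x^*,S\hx^k}+\tfrac{(\eta^k)^2}{m}\n{S\hx^k}^2$ and $\EE[\n{x^{k+1}-x^k}^2\mid\cF^k]=\tfrac{(\eta^k)^2}{m}\n{S\hx^k}^2$, the latter reappearing as the $i=1$ contribution to $\xi^{k+1}$ once the asynchronicity window is shifted forward.

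Next, the cross term. Since $Sx^*=0$ and $T$ is $r$-contractive, $S=I-T$ yields the fundamental inequality $\dotp{\hx^k-x^*,S\hx^k}\ge\tfrac{1-r^2}{2}\n{\hx^k-x^*}^2+\tfrac12\n{S\hx^k}^2$ and $\n{S\hx^k}\le(1+r)\n{\hx^k-x^*}$. Writing $\dotp{x^k-x^*,S\hx^k}=\dotp{\hx^k-x^*,S\hx^k}+\dotp{x^k-\hx^k,S\hx^k}$, bounding the first piece below by the fundamental inequality and the mismatch by the increments inside the delay window, $\n{x^k-\hx^k}\le\sum_{d\ge1}\mathbf{1}\cp{j(k)\ge d}\n{x^{k+1-d}-x^{k-d}}$ (at most one block changes per iteration, cf.\ \Cref{def:ARock}), I would apply Young's and Cauchy--Schwarz inequalities with weights tuned to $\rho^{-d/2}$, then convert $\n{\hx^k-x^*}^2\ge(1-\delta)\n{x^k-x^*}^2-\delta^{-1}\n{x^k-\hx^k}^2$. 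Taking $\EE[\cdot\mid\cG^k]$ replaces each $\mathbf{1}\cp{j(k)\ge d}$ by $P_d=\PP\sp{j(k)\ge d}$ (this is where \Cref{asmp:Stochastic-delays} enters), and the weighted sums that survive are exactly $M_1,M_2$ from \Cref{eq:def:M}. The outcome is a bound of the form $\EE[\xi^{k+1}\mid\cG^k]\le\p{1-\tfrac{1-r^2}{m}+E_0}\n{x^k-x^*}^2+\tfrac1m\sum_{i\ge1}\p{c_{i+1}+a_i}\n{x^{k+1-i}-x^{k-i}}^2+E_1\n{S\hx^k}^2$, where $E_0,E_1\ge0$ are built from $\eta_1-1$, $\delta$, the Young parameter $\alpha$, and the tails $P_l$, while $a_i\ge0$ collects the residues at lag $i$.

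Third, the coefficient choice. I would take $\eta_1$ (slightly above $1$), $\delta$, and $\alpha$ all of order $m^{-1/2}\p{(1-r^2)^{1/2}M_1+2M_2}$ so that, after using $\n{S\hx^k}\le(1+r)\n{\hx^k-x^*}$ to reabsorb the $E_1\n{S\hx^k}^2$ term, the leading drift is restored to $-\tfrac{1-r^2}{m}\n{x^k-x^*}^2$ with $E_0=\cO\p{m^{q-3/2}}$; the point of $\eta_1>1$ is that the extra drift it buys is first order in $\eta_1-1$ while the penalty it incurs is second order, so the inflation pays off once $\eta_1-1$ is small, i.e.\ once $M_1,M_2=\cO\p{m^q}$ with $q<1/2$. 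Then I would solve the recursion $c_{i+1}+a_i\le\rho'c_i$ by $c_i=\sum_{l\ge i}(\rho')^{\,l-i}a_l$; positivity and monotonicity are automatic, and finiteness of $\sum_ic_i$ (hence of $\xi^0$) follows because the weighted tail sums collapse to $M_1$, finite by hypothesis. This gives $\EE[\xi^{k+1}\mid\cG^k]\le\rho'\xi^k$ with $\rho'=1-\tfrac{1-r^2}{m}+\cO\p{m^{q-3/2}}$; iterating, using $\n{x^k-x^*}^2\le\xi^k$, and noting one epoch is $m$ iterations gives $(\rho')^m=\exp\p{-(1-r^2)\p{1+\cO\p{m^{q-1/2}}}}$ and hence $I\p{\eps}=\p{1+\cO\p{m^{-1/2+q}}}\tfrac{1}{1-r^2}\ln\p{1/\eps}$.

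The main obstacle I expect is this third step: exhibiting one choice of $(c_i)$, of $\eta_1$, and of the Young/Cauchy--Schwarz weights that simultaneously closes the recursion with a positive, decreasing, summable solution, keeps $E_0=\cO\p{m^{q-3/2}}$ rather than a larger perturbation, and makes every tail sum that appears reduce to exactly $M_1$ or $M_2$ (and no heavier moment), so that the hypothesis $M_1,M_2=\cO\p{m^q}$ is genuinely what is needed. This is a delicate three-way balance among $\eta_1-1$, the rate surplus, and the size of the $c_i$, and pinning down the exact value of $\eta_1$ and verifying the $\n{S\hx^k}^2$ coefficient is absorbed is where the technical weight of the argument sits.
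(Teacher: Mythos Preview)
Your overall architecture matches the paper's: expand $\EE[\n{x^{k+1}-x^*}^2\mid\cF^k]$, apply the operator inequality $\dotp{S\hx^k,\hx^k-x^*}\ge\tfrac12\n{S\hx^k}^2+\tfrac{1-r^2}{2}\n{\hx^k-x^*}^2$ (the paper's \Cref{lem:S-inequality-Nesterov}) to the cross term, control $\n{x^k-\hx^k}$ by the increments with free Young/AM--GM weights, take the $\cG^k$-expectation so the indicators become $P_l$, and then choose the $c_i$ to close a recursion of the form $\rho c_i\ge c_{i+1}+a_i$. That is exactly the paper's route.

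There are two concrete departures, and one is a genuine gap.

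\emph{First, the sign of $\eta_1-1$ and the handling of $\n{S\hx^k}^2$.} You take $\eta_1>1$ and propose to reabsorb the leftover $E_1\n{S\hx^k}^2$ via $\n{S\hx^k}\le(1+r)\n{\hx^k-x^*}$. The paper does the opposite: it sets $\eta_1=\bigl(1+c_1/m+\n{1/\eps_i}_{\ell^1}\bigr)^{-1}<1$ (see \Cref{eq:def:eta1-general}; the display \Cref{eq:def:eta1-final} is missing the exponent $^{-1}$), and this ceiling is chosen precisely so that the full coefficient of $\n{S\hx^k}^2$, namely $-\tfrac{\eta^k}{m}\bigl(1-\eta^k(1+c_1/m+E_{j(k)})\bigr)$, is nonpositive and can be \emph{dropped}. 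Your rationale that ``the extra drift is first order in $\eta_1-1$ while the penalty is second order'' is incorrect here: after the fundamental inequality the $\n{S\hx^k}^2$ coefficient is $\tfrac{\eta^k}{m}(\eta^k-1)$ at leading order, so pushing $\eta^k$ above $1$ costs you at the same first order it gains you, and after the $(1+r)^2$ conversion you lose a factor $(1+r)/(1-r)$ in the constants. Your variant can still reach an $\cO(m^{q-3/2})$ penalty, but the paper's choice is simpler and sharper, and it avoids reintroducing the delay-dependent quantity $\n{\hx^k-x^*}^2$ after you have already converted it to $\n{x^k-x^*}^2$.

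\emph{Second, the coefficient formula is written in the wrong direction.} Your candidate $c_i=\sum_{l\ge i}(\rho')^{\,l-i}a_l$ solves $c_i=\rho' c_{i+1}+a_i$, not the needed $\rho' c_i\ge c_{i+1}+a_i$; with $\rho'<1$ it in fact gives $c_{i+1}+a_i>\rho' c_i$, so the recursion fails. The correct closed form uses \emph{negative} powers, $c_i=\sum_{l\ge i}a_l\,\rho^{-(l-i+1)}$ (the paper's \Cref{lem:Coefficient-formula}, with the ideal rate $\rho$ rather than $\rho'$). This is also why finiteness of $c_1$ is exactly $\sum_l a_l\rho^{-l}<\infty$, which, after the paper's weight choices $\eps_l=\sqrt{m}\,P_l^{-1/2}\rho^{l/2}$ and $\del_l=\sqrt{m}(1-r^2)^{-1/2}\rho^{l/2}$, collapses to finiteness of $M_1,M_2$. (Incidentally, summability of the $c_i$ is not what is needed for $\xi^0<\infty$: with $x^k=x^0$ for $k\le 0$ all increment terms vanish and $\xi^0=\n{x^0-x^*}^2$.)

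In short: the plan is right, but (i) flip $\eta_1$ below $1$ and drop the $\n{S\hx^k}^2$ term rather than reabsorbing it, and (ii) fix the exponent sign in the $c_i$ formula and run the recursion at the ideal rate $\rho$.
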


The full version of this is \Cref{thm:Linear-convergence-stochastic-delays-full}, which is proven in \Cref{ssec:Proof-of-theorem-stochastic} after a series of results built up in \Cref{sec:Proof-for-Stochastic-Delays}.

Comparing this to \Cref{prop:Block-KM-rates}, we can see that as $m\to\infty$, the iteration complexities of asynchronous and synchronous ARock approach the same value. Thus at scale there is no iteration complexity penalty for using outdated information, so long as that information does not become too old as you scale up the problem (i.e. $M_1$ and $M_2$ don't grow too fast as $m$ increases.).

\begin{rem}[Reduction to synchronous case]
We will see that when there is no asynchronicity, the Lyapunov function will reduce to the classical error $\xi^k=\n{x^k-x^*}^2$, and $\eta_1=1$. Also the convergence rate and iteration complexity will exactly equal the values obtained in \Cref{prop:Block-KM-rates} (again for $p=1$).

\end{rem}

\subsubsection{Deterministic delays}
We also prove a similar convergence result for deterministic unbounded delays. However we present the section in \Cref{sec:Proof-for-Deterministic-Delays}, because the specifics of the theorem are a little subtle. The proof of the stochastic delay result can be seen as a warmup to the deterministic result.

\subsubsection{Completing the main argument}
Hence our convergence rate results complete point 2 of the main argument. We have \textit{more epochs} per given time period, and the epochs make the same progress because iterations are of the \textit{same quality}. Hence asynchronous algorithms may drastically outperform synchronous ones in this setting.

\subsection{Related Work} \label{ssec:Related-work}
Though asynchronous algorithms were invented long ago, there has been a lot of recent interest, especially for random-block-coordinate-type algorithms, such as ``Hogwild!'' \cite{RechtReWrightNiu2011_hogwild}. In \cite{AvronDruinskyGupta2014}, the authors prove linear convergence for an asynchronous stochastic linear solver. In \cite{LiuWrightReBittorfSridhar2015_asynchronous}, the authors prove function-value convergence for asynchronous stochastic coordinate descent. They prove $\cO\p{1/k}$ convergence for $f$ convex with $\nabla f$ Lipschitz, and linear convergence when $f$ is also strongly convex. This was extended in \cite{LiuWright2015_asynchronous} to composite objective functions.

For condition number $\kappa$, they report a per-iteration linear convergence rate of
\begin{align}
1-\frac{1}{2m\kappa}
\end{align}
This implies an iteration complexity approximately $8$ times higher than our result (where our result matches asymptotically to the complexity of the corresponding synchronous algorithm). For asynchronicity to be useful, the reduced penalty would need to compensate for this $8$-fold increase in iterations. Like almost all recent work except for \cite{HannahYin2016_unbounded,PengXuYanYin2016_convergence}, they assume a bounded delay $\tau$. For linear speedup, they require $\tau=\cO(m^{1/2})$ and $\tau=\cO(m^{1/4})$ for composite objectives. We do not require bounded delays for linear speedup, only sufficiently slowly growing moments of delay. For bounded delay our condition is $\tau=\cO(m^{q})$ for $0\leq q<\frac{1}{2}$ for composite and non-composite objectives.

Our work is also more general, since we use the operator setting. So not only do the main results apply to gradient descent, and proximal gradient, but any other algorithm that can be written in a block fixed-point form.

In \cite{ManiaPanPapailiopoulosRechtRamchandranJordan2015_perturbed}, authors achieve a linear speedup but with a far higher iteration complexity of $\Theta\p{\kappa^2\ln\p{1/\eps}}$. When the iteration complexity is increased by a factor of $\kappa$, the linear speedup may be of limited utility. We also note that our conditions for linear speedup are weaker than theirs (which is $\tau=\cO\p{m^{1/6}}$), and they need to assume that $x^k$ remains bounded, which is unjustified.

In \cite{PengXuYanYin2016_convergence} prove function-value linear convergence of an asynchronous block proximal gradient algorithm under unbounded delays. However it is unclear how the iteration complexity they obtain compares to the corresponding synchronous algorithm. Also our result applies to a KM iteration, of which block proximal gradient is a special case.

In \cite{LianZhangHsiehHuangLiu2016_comprehensive}, the authors review a number of asynchronous algorithm analyses and collect conditions necessary for linear speedup on a fixed problem. In light of potentially increased complexity, as seen in \cite{ManiaPanPapailiopoulosRechtRamchandranJordan2015_perturbed}, a linear speedup does not imply that asynchronous algorithms will run faster in time. It only implies that the potential slowdown factor for using asynchronicity is bounded for a given problem, but this bound may be as large as $\kappa$. What we prove in this paper is much stronger, that the slowdown factor (in terms of iterations) for using asynchronicity is asymptotically $1$, i.e. that the slowdown is negligible.

\subsection{Unbounded delays}
Almost all work on asynchronous algorithms except for \cite{HannahYin2016_unbounded,PengXuYanYin2016_convergence} assumed bounded delays. That is, there was a limit $\tau$ such that $j(k)\leq\tau$ for all $k$. However there may be no bound on the delay in practice: There is always the possibility of an arbitrarily large network delay. Also this $\tau$ needs to be known in advance in order to set the step size, which is impractical. A large $\tau$ will hinder the convergence rate by limiting the step size, even if a delay of $\tau$ is extremely unlikely. In this work we assume no bound on the delay. In \Cref{thm:Linear-convergence-stochastic-delays-intro}, we are able to obtain fast convergence if the delay distribution is not too spread out. In \Cref{thm:Linear-convergence-deterministic-delays-full} we are able to determine a convergence rate that depends only on the current delay conditions, not on the worst case behavior of delay on the entire time that the algorithm runs.

\section{Analysis of ARock under Stochastic Delays} \label{sec:Proof-for-Stochastic-Delays}
We now give the full version of \Cref{thm:Linear-convergence-stochastic-delays-intro}. We let the coefficients in $\xi^k$ be given by:
\begin{align}\label{eq:thm:Coefficients-stochastic}
c_i &= m^{1/2}\p{1+\p{1-r^2}^{1/2}}\sum^\infty_{l=i} P_l^{-1/2}\rho^{-(l/2-i+1)}
\end{align}
and define the constant:
\begin{align}
\eta_2 &= m^{1/2}\p{1-r^2}^{-1/2}M_1^{-1}\label{eq:def:eta2-final}
\end{align}
Finally, define the following \textbf{convergence rate function}:
\begin{align}
R\p{\eta,\gamma}& =\p{1-\frac{\eta}{m}\p{1-r^{2}}\p{1-\eta/\gamma}}\label{eq:def:R}
\end{align}
Note that we have $R<1$ when $0<\eta<\gamma$, and the rate is optimized when $\eta=(1/2)\gamma$. Also $\rho\leq R$ for $\eta\leq 1$, where $\rho = 1-(1/m)(1-r^2)$ is the optimal rate that we wish to prove $R$ is close to.

\begin{thm}[Linear convergence for stochastic delays] \label{thm:Linear-convergence-stochastic-delays-full}
Let \Cref{asmp:Block-sequence} and \Cref{asmp:Stochastic-delays} hold. Let the step size $\eta^k$ be $\cF^k$-measurable, and satisfy $\eta^k\leq\eta_1$. Let the probability moments $M_1$ and $M_2$ defined in \Cref{eq:def:M} be finite. Consider the Lyapunov function defined in \Cref{eq:def:Xi} with constants given by \Cref{eq:thm:Coefficients-stochastic}. Then we have the following linear convergence rate:
\begin{align}
\EE\sp{\xi^{k+1}\big|\cG^{k}} &\leq R\p{\eta^k,\eta_2} \xi^k\label{eq:thm:Linear-convergence-stochastic}
\end{align}
Additionally, let $\eta^k=\eta_1$ and assume that $M_1, M_2 = \cO\p{m^{q}}$ for $0\leq q < 1/2$. As the number of blocks $m$ approaches $\infty$, we have the following linear convergence rate and iteration complexity respectively:
\begin{align}
R\p{\eta_{1},\eta_{2}} & =\underbrace{\p{1-\frac{1}{m}\p{1-r^{2}}}}_{\text{Ideal synchronous rate}}+\underbrace{\frac{2}{m^{3/2}}\p{1-r^{2}}^{3/2}\p{\p{1-r^{2}}^{1/2}M_{1}+M_{2}}}_{\text{Asynchronous rate penalty}}\p{1+o\p 1}\label{eq:Asymptotic-rate-stochastic-full}\\
I\p{\eps} & =\p{1+\underbrace{2m^{-1/2}\p{\p{1-r^{2}}^{1/2}M_{1}+M_{2}}}_{\text{Highest order penalty term}}+o\p{m^{-1/2+q}}}\p{\frac{1}{1-r^{2}}}\ln\p{1/\eps}\label{eq:Asymptotical-complexity-full}
\end{align}

\end{thm}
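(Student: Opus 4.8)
The plan is to prove the conditional descent inequality \eqref{eq:thm:Linear-convergence-stochastic} by expanding $\EE[\xi^{k+1}\mid\cF^k]$ — conditioning first on the finer filtration $\cF^k$ (which includes the delay vectors), then taking a further expectation to descend to $\cG^k$. I would split the work into the classical-error part and the asynchronicity-error part of $\xi^k$ as defined in \eqref{eq:def:Xi}, and track how each evolves under one ARock step \eqref{eq:def:ARock}. For the classical term, since block $i(k)$ is uniform and independent of $\cF^k$ (\Cref{asmp:Block-sequence}), conditioning on $\cF^k$ gives $\EE[\n{x^{k+1}-x^*}^2\mid\cF^k] = \n{x^k-x^*}^2 - \tfrac{2\eta^k}{m}\dotp{x^k-x^*, S\hx^k} + \tfrac{(\eta^k)^2}{m}\n{S\hx^k}^2$. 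The key structural facts I would use are: (i) $S = I-T$ with $T$ being $r$-Lipschitz and $x^*\in\Fix T$, which yields the standard contraction-type bound $\dotp{x-x^*, Sx} \ge \tfrac{1}{2}\n{Sx}^2 + \tfrac{1}{2}(1-r^2)\n{x-x^*}^2$ (or the analogous inequality relating $\n{Sx}^2$ and $\n{x-x^*}^2$); and (ii) a way to replace $S\hx^k$ by $Sx^k$ at the cost of terms involving $\n{\hx^k - x^k}^2 \le \sum_{i} \n{x^{k-j(k,i)}}\cdots$, i.e. telescoping sums of consecutive-iterate gaps $\n{x^{k+1-i}-x^{k-i}}^2$ — exactly the quantities the asynchronicity error is built from.

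Next I would handle the asynchronicity error $\tfrac{1}{m}\sum_{i\ge1} c_i\n{x^{k+1-i}-x^{k-i}}^2$. Shifting the index $i\mapsto i+1$ turns $\sum_i c_i\n{x^{k+1-i}-x^{k-i}}^2$ (at time $k+1$) into $\sum_i c_{i+1}\n{x^{k+1-i}-x^{k-i}}^2$ plus the new head term $c_1\n{x^{k+1}-x^k}^2$. The head term equals $c_1\tfrac{1}{m}(\eta^k)^2\n{S\hx^k}^2$ in expectation over $i(k)$. So the change in the asynchronicity error contributes $-\tfrac{1}{m}\sum_i(c_i - c_{i+1})\n{x^{k+1-i}-x^{k-i}}^2 + \tfrac{1}{m}c_1\EE[\n{x^{k+1}-x^k}^2\mid\cF^k]$. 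The decrease coefficients $c_i - c_{i+1}$ must be large enough to absorb the cross terms from $\dotp{x^k-x^*, S\hx^k - Sx^k}$; this is precisely what dictates the closed form \eqref{eq:thm:Coefficients-stochastic}. Here is where \Cref{asmp:Stochastic-delays} enters: after conditioning on $\cF^k$ I still have random $\vec j(k)$, so when I take $\EE[\,\cdot\mid\cG^k]$ the coupling coefficients get weighted by $P_l = \PP[j(k)\ge l]$, and the Cauchy–Schwarz splitting that balances a gap term $\n{x^{k-l}-x^{k-l-1}}^2$ against the cross term produces the $P_l^{1/2}\rho^{-l/2}$ and $P_l\rho^{-l/2}$ weights — hence the moments $M_1, M_2$ in \eqref{eq:def:M}. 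Collecting everything, one wants the inequality to close in the form $\EE[\xi^{k+1}\mid\cG^k]\le (1 - \tfrac{\eta^k}{m}(1-r^2)(1-\eta^k/\eta_2))\xi^k$, which forces the definition $\eta_2 = m^{1/2}(1-r^2)^{-1/2}M_1^{-1}$ and the admissible range $\eta^k\le\eta_1$ with $\eta_1$ as in \eqref{eq:def:eta1-final}.

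For the asymptotic statements, I would simply substitute $\eta^k=\eta_1$ and $\gamma=\eta_2$ into the convergence rate function $R(\eta,\gamma)$ of \eqref{eq:def:R} and Taylor-expand in $m^{-1/2}$, using the hypothesis $M_1,M_2=\cO(m^q)$ with $q<1/2$ so that $\eta_1 = 1 + m^{-1/2}((1-r^2)^{1/2}M_1 + 2M_2) = 1 + o(1)$ and $\eta_1/\eta_2 = m^{-1/2}(1-r^2)^{1/2}M_1\cdot(1+o(1))$. Keeping terms through order $m^{-3/2}$ gives \eqref{eq:Asymptotic-rate-stochastic-full}; converting the rate to a complexity via $I(\eps) = \ln(1/\eps)/\ln(1/R) = \ln(1/\eps)/(-\ln R)$ and expanding the logarithm gives \eqref{eq:Asymptotical-complexity-full}. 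The iteration complexity over $\cG^k$ follows by taking total expectations in \eqref{eq:thm:Linear-convergence-stochastic} and iterating.

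The main obstacle is the careful bookkeeping in the middle step: choosing the coefficient sequence $(c_i)$ so that (a) it is summable (needed for $\xi^k$ to be finite), (b) the telescoped decrease $c_i - c_{i+1}$ dominates the delay-weighted cross terms for \emph{every} $i$ simultaneously, and (c) the head term $c_1\n{x^{k+1}-x^k}^2$ does not overwhelm the gain $\tfrac{1}{2}(1-r^2)\n{x^k-x^*}^2$ from the classical part. Getting the constants to line up so that the final rate is genuinely $\rho + \cO(m^{-3/2})$ rather than something weaker — i.e. extracting the $\rho^{-l/2}$ geometric weighting from the synchronous rate $\rho$ itself and matching it against the decay one can afford in $(c_i)$ — is the delicate technical heart, and it is exactly why the authors emphasize that the choice of coefficients is the key innovation. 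Verifying that the inequality $\rho\le R(\eta,\eta_2)$ holds for $\eta\le 1$ (as noted after \eqref{eq:def:R}) is a useful sanity check that the chosen constants are consistent.
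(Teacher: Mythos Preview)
Your overall architecture matches the paper: expand $\EE[\n{x^{k+1}}^2\mid\cF^k]$, use the contraction inequality for $S=I-T$, telescope the stale-iterate gaps into the Lyapunov tail, take a further expectation over the delay to reach $\cG^k$ and pick up the $P_l$ weights, then choose $(c_i)$ so that the recursion $\rho c_i=c_{i+1}+s_i$ closes (this is exactly \Cref{lem:Coefficient-formula} and \Cref{pro:Linear-convergence-general-stochastic}). The asymptotic part is also correctly sketched.

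There is, however, one substantive difference in how you handle the cross term that would prevent you from landing on the \emph{exact} constants in \eqref{eq:def:eta1-final}, \eqref{eq:def:eta2-final}, and \eqref{eq:Asymptotic-rate-stochastic-full}. You propose to ``replace $S\hx^k$ by $Sx^k$'', i.e.\ write $\dotp{x^k,S\hx^k}=\dotp{x^k,Sx^k}+\dotp{x^k,S\hx^k-Sx^k}$ and control the error via Lipschitzness of $S$. The paper does the \emph{opposite} (see \Cref{rem:Strategy} and the proof of \Cref{lem:E-Norm-Fk}): it keeps $S\hx^k$ intact, writes $\dotp{x^k,S\hx^k}=\dotp{\hx^k,S\hx^k}+\dotp{x^k-\hx^k,S\hx^k}$, applies \Cref{lem:S-inequality-Nesterov} at $\hx^k$, and only afterwards converts $-\n{\hx^k}^2\to-\n{x^k}^2$. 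The point is that the squared term in \eqref{eq:Starting-point} is $\n{S\hx^k}^2$, not $\n{Sx^k}^2$; working at $\hx^k$ gives exact cancellation of $\n{S\hx^k}^2$, whereas your route would have to bound $\n{S\hx^k}^2$ in terms of $\n{Sx^k}^2$ and incur extra $(1+r)$-type factors that propagate into $\eta_1,\eta_2$.

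A second, more mechanical point you are missing: the paper carries \emph{two} families of free parameters $(\eps_i)_{i\ge1}$ and $(\del_i)_{i\ge1}$ (one from each application of the weighted AM--GM \Cref{lem:AM-GM-trick}: once for $\dotp{x^k-\hx^k,S\hx^k}$, once for $-\n{\hx^k}^2\to-\n{x^k}^2$). Linear convergence is first established for arbitrary such parameters (\Cref{pro:Linear-convergence-general-stochastic}), and only at the end are they set to $\eps_i=m^{1/2}P_i^{-1/2}\rho^{i/2}$ (genuinely optimal for $\eta_1$) and $\del_i=m^{1/2}(1-r^2)^{-1/2}\rho^{i/2}$ (chosen to balance the $m^{-1/2}$ contributions in $\eta_1$ and $\eta_2^{-1}$). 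Your single ``Cauchy--Schwarz splitting'' is the right idea, but without this two-parameter freedom and the subsequent optimization you will not recover the specific moments $M_1,M_2$ and the coefficient $2m^{-1/2}\bigl((1-r^2)^{1/2}M_1+M_2\bigr)$ in \eqref{eq:Asymptotical-complexity-full}.
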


We chose to include the exact form of the highest order iteration complexity penalty. This allows us to calculate approximately how large $m$ has to be in order for asynchronicity to cause negligible penalty.

We now prove \Cref{thm:Linear-convergence-stochastic-delays-full} in a way that emphasizes the reasons and intuition behind our approach -- especially the strategic way in which the coefficients are chosen.

\subsection{Preliminary results}
Let $x^*$ be any solution, and set $x^*=0$ with no loss in generality, to make the notation more compact. This can be achieved by translating the origin of the coordinate system to $x^*$. Hence $\n{ x^{k}}$ is the distance from the solution. The starting point of our analysis is the following\footnote{We will use an abuse of notation in this paper. We equate $S_i(x)\in\HH_i$ (the components of $S(x)$ in the $i$th block) and $(0,\ldots,0,S_i(x),0,\ldots,0)\in\HH_1\times\ldots\times\HH_m$ (the projection of $S(x)$ to the $i$'th subspace). Hence we can write the ARock iteration more compactly as $x^{k+1}=x^{k}-\eta^{k}S_{i\p{k}}\hx^{k}$.}:
\begin{align*}
\EE\sp{\n{ x^{k+1}} ^{2}|\cF^{k}} & = \EE\sp{\n{ x^{k}-\eta^{k}S_{i\p{k}}\hx^{k}} ^{2}|\cF^{k}}\\
 & = \n{ x^{k}} ^{2}+\EE\sp{-2\eta^{k}\dotp{ x^{k},S_{i\p{k}}\hx^{k}} +\p{\eta^{k}}^{2}\n{ S_{i\p{k}}\hx^{k}} ^{2}|\cF^{k}}.
\end{align*}
Here the expectation is taken over only the block index $i\p{k}$ (Recall \Cref{asmp:Block-sequence}). We let the step size $\eta^k$ be $\cG^k$-measurable (and hence $\cF^k$-measurable). Essentially this means that the step size $\eta^k$ can depend only on the sequence $\p{x^0,x^1,\ldots,x^k}$, and not the block index or delay. Hence
\begin{align}\label{eq:Starting-point}
\EE\sp{\n{ x^{k+1}} ^{2}|\cF^{k}} & = \n{ x^{k}} ^{2}\underbrace{-2\frac{\eta^{k}}{m}\dotp{ x^{k},S\hx^{k}} }_{\text{cross term}}+\frac{\p{\eta^{k}}^{2}}{m}\n{ S\hx^{k}}^{2}.
\end{align}

We now present a simple lemma on the operator $S$ that will be used in the convergence proof (This is the operator version of Theorem 2.1.12 in \cite{Nesterov2013_introductory}).

\begin{lem}\label{lem:S-inequality-Nesterov}
Let $S=I-T$, where $T$ is an $r$-Lipschitz operator. Then for all
$x,y\in\HH$ we have:
\begin{align}
\dotp{Sy-Sx,y-x} & \geq\frac{1}{2}\n{Sy-Sx}^{2}+\frac{1}{2}\p{1-r^{2}}\n{y-x}^{2}
\end{align}

\end{lem}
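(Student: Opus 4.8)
The plan is to reduce the inequality to a one-line algebraic computation in two auxiliary vectors. I would set $u = y - x$ and $v = Ty - Tx$, so that the only fact about $v$ that enters is the contraction bound $\n{v} \le r\n{u}$. Since $S = I - T$, one has $Sy - Sx = (y - Ty) - (x - Tx) = u - v$, and the claim is exactly
\[
\dotp{u - v,\, u} \ \geq\ \tfrac12\n{u - v}^2 + \tfrac12\p{1 - r^2}\n{u}^2 .
\]

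The key step is to expand both sides and watch the cancellation. The left side equals $\n{u}^2 - \dotp{u,v}$; using $\n{u - v}^2 = \n{u}^2 - 2\dotp{u,v} + \n{v}^2$, the right side equals $\n{u}^2 - \dotp{u,v} + \tfrac12\n{v}^2 - \tfrac12 r^2\n{u}^2$. Subtracting, the $\n{u}^2$ and $\dotp{u,v}$ terms cancel exactly, so
\[
\dotp{Sy - Sx,\, y - x} - \tfrac12\n{Sy - Sx}^2 - \tfrac12\p{1 - r^2}\n{y - x}^2 \ =\ \tfrac12\p{r^2\n{u}^2 - \n{v}^2},
\]
and the right-hand side is nonnegative precisely because $\n{v} = \n{Ty - Tx} \le r\n{y - x} = r\n{u}$, which is the definition of $T$ being $r$-Lipschitz. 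That completes the argument.

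I do not expect any genuine obstacle: the single hypothesis used is the Lipschitz bound, and the rest is just completing the square. It is worth noting that the computation pins down the slack exactly as $\tfrac12\p{r^2\n{u}^2 - \n{v}^2}$, so the inequality is tight whenever $T$ attains its Lipschitz bound on the pair $\{x,y\}$ (e.g. $T = rI$), which shows the constants $\tfrac12$ and $\tfrac12(1-r^2)$ are sharp.
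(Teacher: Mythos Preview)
Your proof is correct and is essentially the same approach as the paper's: both expand $\n{(y-x)-(Sy-Sx)}^{2}=\n{Ty-Tx}^{2}$ and invoke the Lipschitz bound $\n{Ty-Tx}\le r\n{y-x}$, differing only in whether one starts from the Lipschitz inequality and rearranges (the paper) or starts from the target inequality and reduces to it (your version). Your observation about tightness is a nice bonus not stated in the paper's proof.
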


\begin{proof}
\begin{align*}
\text{(\ensuremath{T} is \ensuremath{r}-Lipschitz) }r^{2}\n{y-x}^{2} & \geq\n{Ty-Tx}^{2}\\
 & =\n{\p{I-S}y-\p{I-S}x}^{2}\\
 & =\n{Sy-Sx}^{2}-2\dotp{Sy-Sx,y-x}+\n{y-x}^{2}\\
\text{(rearrange) }\dotp{Sy-Sx,y-x} & \geq\frac{1}{2}\n{Sy-Sx}^{2}+\frac{1}{2}\p{1-r^{2}}\n{y-x}^{2} \qedhere
\end{align*}
\end{proof}

\subsection{The cross term}
\begin{rem}[Strategy] \label{rem:Strategy}
Consider \Cref{eq:Starting-point} again. The $\n{S\hx^{k}}^{2}$ term in \Cref{eq:Starting-point} can be thought of as a ``waste'' term that has to be negated. In light of \Cref{lem:S-inequality-Nesterov}, a $-\dotp{S\hx^k,\hx^k}$ term can be used to generate a $-\n{S\hx^k}^2$ term to clean this waste. In addition to cleaning this waste, ideally we would have a $-\n{x^k}^2$ to help prove linear convergence, but instead \Cref{lem:S-inequality-Nesterov} produces $-\n{\hx^k}^2$.

The strategy we pursue is as follows: The cross term $-\dotp{S\hx^k,x^k}$ is approximately equal to $-\dotp{S\hx^k,\hx^k}$, which allows us to clean  the $\n{S\hx^k}^2$ term. The $-\n{\hx^k}^2$ that is also generated is approximately equal to $-\n{x^k}^2$, which helps prove linear convergence. However, there is an error associated with this ``conversion''. Finally, this conversion error is negated by the use of as Lyapunov function (see \cref{eq:def:Xi}).

\end{rem}

\Cref{lem:AM-GM-trick} will eventually allow us to quantify the error associated with converting $-\dotp{S\hx^k,x^k}$ to $-\dotp{S\hx^k,\hx^k}$, and the error associated with converting $-\n{\hx^k}^2$ to $-\n{x^k}^2$ mentioned in \Cref{rem:Strategy}.

\begin{lem}\label{lem:AM-GM-trick}
Let $a>0$, $j(k)$ be the current delay, $\eta^k$ be the current step size, and $\eps_1,\eps_2,\ldots>0$ be a series of parameters. Then we have:
\begin{align}
a\n{x^k-\hx^k} &\leq \frac{1}{2}a^{2}\eta^{k}\p{\sum_{i=1}^{j(k)}\frac{1}{\eps_{i}}}+\frac{1}{2}\frac{1}{\eta^{k}}\sum_{i=1}^{j(k)}\p{\eps_{i}\n{x^{k+1-i}-x^{k-i}}^{2}}\label{eq:AM-GM-trick}
\end{align}
\end{lem}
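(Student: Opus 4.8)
The plan is to bound the single term $a\|x^k - \hx^k\|$ by writing the difference $x^k - \hx^k$ as a telescoping sum over the delayed blocks and then applying a weighted arithmetic–geometric mean inequality (Young's inequality) with the free parameters $\eps_i$. First I would observe that, block by block, $x_i^k - \hx_i^k = x_i^k - x_i^{k-j(k,i)}$, which is a telescoping sum $\sum_{l=1}^{j(k,i)} \bigl(x_i^{k+1-l} - x_i^{k-l}\bigr)$ of the per-iteration increments. Because only one block is updated per iteration of ARock, at most one of these block-differences is nonzero at each index $l$; summing the squared norms over all blocks therefore collapses to $\|x^k - \hx^k\|^2 \le \bigl(\sum_{l=1}^{j(k)} \|x^{k+1-l} - x^{k-l}\|\bigr)^2$ after extending the inner sums to the common upper limit $j(k) = \max_i j(k,i)$ (padding the shorter sums with zero increments, which only increases the right-hand side). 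In fact it is cleanest to just write $\|x^k - \hx^k\| \le \sum_{l=1}^{j(k)} \|x^{k+1-l} - x^{k-l}\|$ directly by the triangle inequality applied blockwise and the single-block-update structure.

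Next I would apply Young's inequality in the form $a b \le \tfrac{1}{2}\alpha a^2 + \tfrac{1}{2}\alpha^{-1} b^2$ to each product $a \cdot \|x^{k+1-l} - x^{k-l}\|$ appearing after distributing $a$ across the sum, choosing the scaling constant $\alpha = \eta^k/\eps_l$ for the $l$-th term. This yields
\begin{align*}
a\n{x^k - \hx^k} &\le \sum_{l=1}^{j(k)} a\n{x^{k+1-l}-x^{k-l}}
\le \sum_{l=1}^{j(k)} \p{\frac{1}{2}\frac{\eta^k}{\eps_l} a^2 + \frac{1}{2}\frac{\eps_l}{\eta^k}\n{x^{k+1-l}-x^{k-l}}^2},
\end{align*}
which is exactly \Cref{eq:AM-GM-trick} after grouping the two sums. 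The role of the $\eta^k$ factor in the splitting is to make the weights in the Lyapunov-function error (the second sum) scale correctly against the step size, matching the coefficients $c_i$ chosen later in \Cref{eq:thm:Coefficients-stochastic}; this is why $\eta^k$ is placed asymmetrically.

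The only genuinely delicate point — and the one I would write out most carefully — is the reduction from the squared/blockwise quantity to the flat sum $\sum_{l=1}^{j(k)}\|x^{k+1-l}-x^{k-l}\|$. One must use that ARock updates a single block $i(k)$ per step, so the increment $x^{k+1-l} - x^{k-l}$ is supported on one block, and hence $\|x^k - \hx^k\|^2 = \sum_i \|x_i^k - x_i^{k-j(k,i)}\|^2 = \sum_i \bigl\|\sum_{l=1}^{j(k,i)}(x_i^{k+1-l}-x_i^{k-l})\bigr\|^2$; bounding each inner norm by $\sum_l \|x_i^{k+1-l}-x_i^{k-l}\|$, extending to $j(k)$, and reassembling gives $\|x^k-\hx^k\| \le \sum_{l=1}^{j(k)}\|x^{k+1-l}-x^{k-l}\|$. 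Everything after that is the routine Young-inequality bookkeeping described above, so the lemma follows immediately.
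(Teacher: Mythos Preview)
Your proof is correct and is exactly the standard argument the paper defers to its references for (the paper's own ``proof'' is just the line ``See \cite{HannahYin2016_unbounded,PengXuYanYin2016_arock}''). The two steps --- the telescoping/triangle-inequality bound $\|x^k-\hx^k\|\le\sum_{l=1}^{j(k)}\|x^{k+1-l}-x^{k-l}\|$ followed by term-wise Young with weight $\eta^k/\eps_l$ --- are precisely what those references do.

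One small remark: you state that ``one must use'' the single-block-update structure of ARock for the first step, but in fact it is not needed. Writing $x_i^k-\hx_i^k=\sum_{l=1}^{j(k)}u_{i,l}$ with $u_{i,l}=(x_i^{k+1-l}-x_i^{k-l})\mathbb{1}[l\le j(k,i)]$ and applying Minkowski in the block-$\ell^2$ norm gives
\[
\|x^k-\hx^k\|=\Bigl\|\sum_{l=1}^{j(k)}(u_{i,l})_i\Bigr\|\le\sum_{l=1}^{j(k)}\|(u_{i,l})_i\|\le\sum_{l=1}^{j(k)}\|x^{k+1-l}-x^{k-l}\|,
\]
with no appeal to which blocks were updated. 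Your version via the single-block structure is also fine (and is how the ARock paper argues), just slightly less general than necessary.
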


\begin{proof}
See \cite{HannahYin2016_unbounded,PengXuYanYin2016_arock}.
\end{proof}

\begin{rem}[Free parameters] \label{rem:Free-parameters}
\Cref{lem:E-Norm-Fk} generates some positive parameters $\eps_{1},\eps_{2},\ldots>0$ and $\del_{1},\del_{2},\ldots>0$. It's not immediately clear what these parameters should be set to. However we will see in \Cref{ssec:Linear-convergence-stochastic} if they are properly chosen, they can be used to construct a Lyapunov function that will allow us to prove linear convergence.

\end{rem}

We will make use of \Cref{lem:AM-GM-trick} twice with parameter sets $\seq{\eps}$ and $\seq{\del}$ respectively. To simplify notation, we define:
\begin{align}
E_{j} &= \sum_{i=1}^{j}\frac{1}{\eps_{i}} & D_j &= \sum_{i=1}^{j}\frac{1}{\del_{i}} \label{eq:def:E-and-D}
\end{align}

\begin{lem}\label{lem:E-Norm-Fk}
Let \Cref{asmp:Block-sequence} hold. Let $\eps_{1},\eps_{2},\ldots>0$ and $\del_{1},\del_{2},\ldots>0$
be a sequence of parameters, and $E_j$, $D_j$ defined as above. Let $\eta^k$ be $\cG^k$-measurable. ARock yields the following inequality:
\begin{align*}
\EE\sp{\n{x^{k+1}}^2\big|\cF^k} & \leq\p{1-\frac{\eta^{k}}{m}\p{1-r^{2}}\p{1-\eta^{k}D_{j(k)}}}\n{x^{k}}^{2}+\frac{1}{m}\sum_{i=1}^{j(k)}\p{\del_{i}\p{1-r^{2}}+\eps_{i}}\n{x^{k+1-i}-x^{k-i}}^{2}\\
 & -\frac{\eta^{k}}{m}\n{S\hx^{k}}^{2}\p{1-\eta^{k}\p{1+E_{j(k)}}}
\end{align*}
\end{lem}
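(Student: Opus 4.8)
The plan is to start from the exact identity \eqref{eq:Starting-point} and process the cross term $-2\frac{\eta^k}{m}\dotp{x^k, S\hx^k}$ using the two-step "conversion" strategy outlined in \Cref{rem:Strategy}. First I would decompose
\[
-\dotp{x^k, S\hx^k} = -\dotp{\hx^k, S\hx^k} - \dotp{x^k - \hx^k, S\hx^k},
\]
and, since $Sx^* = S0 = 0$, apply \Cref{lem:S-inequality-Nesterov} with $y = \hx^k$, $x = 0$ to the first piece:
\[
-\dotp{\hx^k, S\hx^k} \le -\tfrac12 \n{S\hx^k}^2 - \tfrac12(1-r^2)\n{\hx^k}^2.
\]
This generates the $-\n{S\hx^k}^2$ term needed to cancel the "waste" term $\frac{(\eta^k)^2}{m}\n{S\hx^k}^2$ in \eqref{eq:Starting-point}, and a $-\n{\hx^k}^2$ term which I still need to convert back to $-\n{x^k}^2$.

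Next I would handle the two error terms with \Cref{lem:AM-GM-trick}. For the cross-term error $-\dotp{x^k - \hx^k, S\hx^k}$, bound it by $\n{x^k - \hx^k}\,\n{S\hx^k}$ and apply \eqref{eq:AM-GM-trick} with $a = \n{S\hx^k}$ and the parameter set $\seq{\eps}$; this produces $\tfrac12 \n{S\hx^k}^2 \eta^k E_{j(k)}$ plus a telescoping-type sum $\tfrac{1}{2\eta^k}\sum_{i=1}^{j(k)} \eps_i \n{x^{k+1-i}-x^{k-i}}^2$. For the conversion of $-\tfrac12(1-r^2)\n{\hx^k}^2$ to $-\tfrac12(1-r^2)\n{x^k}^2$, I would write $\n{x^k}^2 - \n{\hx^k}^2 = (\n{x^k} - \n{\hx^k})(\n{x^k}+\n{\hx^k})$ — or, more carefully, use $\n{\hx^k}^2 \ge \n{x^k}^2 - 2\n{x^k}\n{x^k-\hx^k}$ (via $\n{\hx^k} \ge \n{x^k} - \n{x^k-\hx^k}$ followed by the reverse triangle step, being careful about signs) — and then apply \Cref{lem:AM-GM-trick} again with $a$ proportional to $\n{x^k}$ and parameter set $\seq{\del}$, yielding the $\eta^k D_{j(k)}$ correction multiplying $\n{x^k}^2$ and another delayed sum $\sum_{i=1}^{j(k)} \del_i \n{x^{k+1-i}-x^{k-i}}^2$, up to the factor $(1-r^2)$.

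The main obstacle, and the step I would treat most carefully, is the second conversion: turning $-\n{\hx^k}^2$ into $-\n{x^k}^2$ requires controlling $\n{x^k}\,\n{x^k-\hx^k}$, and getting the $(1-r^2)$ factor to land exactly where it does in the statement (multiplying both $\del_i(1-r^2)$ in the delayed sum and $(1-r^2)\eta^k D_{j(k)}$ in the leading coefficient) forces one to apply \Cref{lem:AM-GM-trick} to $\tfrac12(1-r^2)(2\n{x^k}\n{x^k-\hx^k})$ rather than to $\n{x^k}\n{x^k-\hx^k}$ directly, so the bookkeeping of constants must be done with care. Everything else is then a matter of collecting terms: the $\n{S\hx^k}^2$ contributions sum to $-\frac{\eta^k}{m}\n{S\hx^k}^2 + \frac{(\eta^k)^2}{m}\n{S\hx^k}^2(1 + E_{j(k)})$, which is exactly $-\frac{\eta^k}{m}\n{S\hx^k}^2(1 - \eta^k(1+E_{j(k)}))$; the $\n{x^k}^2$ contributions give $(1 - \frac{\eta^k}{m}(1-r^2)(1-\eta^k D_{j(k)}))\n{x^k}^2$; and the delayed difference terms $\n{x^{k+1-i}-x^{k-i}}^2$ combine with coefficient $\frac1m(\del_i(1-r^2) + \eps_i)$ — noting that the $\frac{1}{\eta^k}$ denominators from \Cref{lem:AM-GM-trick} cancel against the $\frac{\eta^k}{m}$ prefactor of the cross term. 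Recall $S_{i(k)}$ is identified with its projection (per the footnote), and the expectation over $i(k)$ in \eqref{eq:Starting-point} has already converted block quantities to $\frac1m$ times full quantities, so no further probabilistic step is needed here — the lemma is a deterministic inequality valid pointwise after the $\cF^k$-conditioning. I would finish by remarking that the free parameters $\seq{\eps}$, $\seq{\del}$ are left unspecified, to be optimized later in \Cref{ssec:Linear-convergence-stochastic} when the Lyapunov coefficients $(c_i)$ are chosen.
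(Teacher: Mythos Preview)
Your proposal is correct and follows essentially the same route as the paper: decompose the cross term, apply \Cref{lem:S-inequality-Nesterov} at $(\hx^k,0)$, then use \Cref{lem:AM-GM-trick} twice (with $a=\n{S\hx^k}$, parameters $\seq{\eps}$, and with $a=\n{x^k}$, parameters $\seq{\del}$) before collecting terms. The only cosmetic difference is that the paper obtains $-\n{\hx^k}^2 \le -\n{x^k}^2 + 2\n{x^k}\n{x^k-\hx^k}$ via the exact expansion $-\n{\hx^k}^2 = -\n{x^k}^2 - 2\dotp{\hx^k-x^k,x^k} - \n{x^k-\hx^k}^2$ followed by Cauchy--Schwarz (and dropping the nonpositive square), rather than the reverse-triangle argument you sketch; both yield the same bound, and your bookkeeping of the $(1-r^2)$ factor and the $\eta^k$ cancellations is accurate.
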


\begin{proof}
We make use of \Cref{lem:AM-GM-trick} twice in this proof, with parameter sets $\seq{\eps}$ and $\seq{\del}$ respectively.
\begin{align}
& -2\frac{\eta^{k}}{m}\dotp{x^{k},S\hx^{k}}\nonumber\\
& =-2\frac{\eta^{k}}{m}\dotp{\hx^{k},S\hx^{k}}-2\frac{\eta^{k}}{m}\dotp{x^{k}-\hx^{k},S\hx^{k}}\nonumber\\
& \leq-\frac{\eta^{k}}{m}\p{\n{S\hx^{k}}^{2}+\p{1-r^{2}}\n{\hx}^{2}}+2\frac{\eta^{k}}{m}\n{x^{k}-\hx^{k}}\cdot\n{S\hx^{k}}\nonumber\\
& \leq-\frac{\eta^{k}}{m}\p{\n{S\hx^{k}}^{2}+\p{1-r^{2}}\n{\hx^{k}}^{2}}+2\frac{\eta^{k}}{m}\p{\frac{1}{2}\n{S\hx^{k}}^{2}\eta^{k}E_{j(k)}+\frac{1}{2}\frac{1}{\eta^{k}}\sum_{i=1}^{j(k)}\p{\eps_{i}\n{x^{k+1-i}-x^{k-i}}^{2}}}\nonumber\\
& = -\frac{\eta^{k}}{m}\p{1-r^{2}}\n{\hx^{k}}^{2}+\frac{1}{m}\sum_{i=1}^{j(k)}\eps_{i}\n{x^{k+1-i}-x^{k-i}}^{2}-\frac{\eta^{k}}{m}\n{S\hx^{k}}^{2}\p{1-\eta^{k}D_{j(k)}}\label{eq:Fundamental-lemma-middle}
\end{align}
For sufficiently small step size, this inequality allows us to negate the $\n{S\hx^{k}}^{2}$ terms. Now let's examine $-\n{\hx^{k}}^{2}$, which we convert to a $-\n{x^{k}}^{2}$ term (and some error) for linear convergence.
\begin{align*}
-\n{\hx^{k}}^{2} & =-\n{x^{k}}^{2}-2\dotp{\hx^{k}-x^{k},x^{k}}-\n{x^{k}-\hx^{k}}^{2}\\
& \leq-\n{x^{k}}^{2}+2\n{\hx^{k}-x^{k}}\n{x^{k}}\\
\text{(\Cref{lem:AM-GM-trick})}& \leq-\n{x^{k}}^{2}+\n{x^{k}}^{2}\eta^{k}D_{j(k)}+\frac{1}{\eta^{k}}\sum_{i=1}^{j(k)}\p{\del_{i}\n{x^{k+1-i}-x^{k-i}}^{2}}\\
& =-\p{1-\eta^{k}D_{j(k)}}\n{x^{k}}^{2}+\frac{1}{\eta^{k}}\sum_{i=1}^{j(k)}\p{\del_{i}\n{x^{k+1-i}-x^{k-i}}^{2}}
\end{align*}
Hence substituting into \eqref{eq:Fundamental-lemma-middle}, we have
\begin{align*}
-2\frac{\eta^{k}}{m}\dotp{x^{k},S\hx^{k}} & \leq-\frac{\eta^{k}}{m}\p{1-r^{2}}\p{1-\eta^{k}D_{j(k)}}\n{x^{k}}^{2}+\frac{\eta^{k}}{m}\p{1-r^{2}}\frac{1}{\eta^{k}}\sum_{i=1}^{j(k)}\p{\del_{i}\n{x^{k+1-i}-x^{k-i}}^{2}}\\
& +\frac{1}{m}\sum_{i=1}^{j(k)}\eps_{i}\n{x^{k+1-i}-x^{k-i}}^{2}-\frac{\eta^{k}}{m}\n{S\hx^{k}}^{2}\p{1-\eta^{k}D_{j(k)}}\\
& =-\frac{\eta^{k}}{m}\p{1-r^{2}}\p{1-\eta^{k}D_{j(k)}}\n{x^{k}}^{2}+\frac{1}{m}\sum_{i=1}^{j(k)}\p{\del_{i}\p{1-r^{2}}+\eps_{i}}\n{x^{k+1-i}-x^{k-i}}^{2}\\
& -\frac{\eta^{k}}{m}\n{S\hx^{k}}^{2}\p{1-\eta^{k}D_{j(k)}}
\end{align*}
Using \eqref{eq:Starting-point} immediately yields the result.
\end{proof}

\subsection{The Lyapunov function}
We now consider how the Lyapunov function defined in \Cref{eq:def:Xi} changes in size from step to step. The reason that a Lyapunov function is needed is to deal with the $\n{x^{k+1-i}-x^{k-i}}^2$ terms. They cannot be negated like $\n{S\hx^k}^2$ terms, and so must be incorporated into the error by using a Lyapunov function. Let $P_l = \PP\sp{j\p{k}\geq l}$.

\begin{lem}\label{lem:Expectation-xi-Gk-stochastic}
Let the conditions of \Cref{lem:E-Norm-Fk} and \Cref{asmp:Stochastic-delays} hold. Define
\begin{align}
\eta_{1} & =\p{1+\frac{c_{1}}{m}+\n{\frac{1}{\eps_{i}}}_{\ell^{1}}}^{-1}\label{eq:def:eta1-general}\\
\eta_{2} & =\p{\sum_{i=1}^{\infty}\frac{P_{i}}{\del_{i}}}^{-1}\label{eq:def:eta2-general}\\
R\p{\eta,\gamma}& =\p{1-\frac{\eta}{m}\p{1-r^{2}}\p{1-\eta/\gamma}}\label{eq:def:R2}
\end{align}
Let $\eta^k$ be $\cG^k$-measurable, and  $\eta^k\leq\eta_1$. Then ARock satisfies:
\begin{align*}
\EE\sp{\xi^{k+1}\big|\cG^{k}} & \leq\n{x^{k}}^{2}R\p{\eta^k,\eta_2} +\frac{1}{m}\sum_{i=1}^{\infty}\p{\p{\eps_{i}+\p{1-r^{2}}\delta_{i}}P_{i}+c_{i+1}}\n{x^{k+1-i}-x^{k-i}}^{2}
\end{align*}

\end{lem}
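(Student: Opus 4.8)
I would start from Lemma~\ref{lem:E-Norm-Fk}, which already gives a clean bound on $\EE[\n{x^{k+1}}^2 \mid \cF^k]$ in terms of $\n{x^k}^2$, the backward differences $\n{x^{k+1-i}-x^{k-i}}^2$, and a nonpositive $\n{S\hx^k}^2$ term (provided $\eta^k(1+E_{j(k)})\le 1$, which is exactly what the definition \eqref{eq:def:eta1-general} of $\eta_1$ is engineered to guarantee, since $\eta^k\le\eta_1$ forces $\eta^k(1+c_1/m+\n{1/\eps_i}_{\ell^1})\le 1$ and $E_{j(k)}\le\n{1/\eps_i}_{\ell^1}$). The first move is to note $c_1/m \ge 0$ so the $\n{S\hx^k}^2$ coefficient $1-\eta^k(1+E_{j(k)})$ is $\ge 0$, hence that term may simply be dropped.

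\textbf{Key steps.} (1) Take the expression for $\xi^{k+1}$ from \eqref{eq:def:Xi}, namely $\xi^{k+1}=\n{x^{k+1}}^2+\frac{1}{m}\sum_{i\ge1}c_i\n{x^{k+2-i}-x^{k+1-i}}^2$, and condition on $\cF^k$. The second sum, re-indexed, is $\frac1m c_1\n{x^{k+1}-x^k}^2+\frac1m\sum_{i\ge1}c_{i+1}\n{x^{k+1-i}-x^{k-i}}^2$; crucially, for $i\ge1$ these terms are $\cF^k$-measurable and pass through the conditional expectation untouched. (2) For the $i=1$ term, bound $\n{x^{k+1}-x^k}^2 = (\eta^k)^2\n{S_{i(k)}\hx^k}^2$ and take expectation over the block index to get $\frac1m(\eta^k)^2\n{S\hx^k}^2$; this is absorbed into (equivalently, makes more negative) the $-\frac{\eta^k}{m}\n{S\hx^k}^2(1-\eta^k(1+E_{j(k)}))$ term from Lemma~\ref{lem:E-Norm-Fk} precisely because $\eta^k(1+c_1/m+E_{j(k)})\le\eta^k(1+c_1/m+\n{1/\eps_i}_{\ell^1})\le 1$ — so again we may discard the whole $\n{S\hx^k}^2$ contribution. (3) Now apply $\EE[\cdot\mid\cG^k]=\EE[\EE[\cdot\mid\cF^k]\mid\cG^k]$ and use Assumption~\ref{asmp:Stochastic-delays}: $\vec j(k)$ is independent of $\cG^k$, so $j(k)$ is independent of $\cG^k$ and of the $\cG^k$-measurable quantities $\n{x^k}^2$, $\n{x^{k+1-i}-x^{k-i}}^2$, $\eta^k$. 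Hence $\EE[\mathbf 1_{\{i\le j(k)\}}\mid\cG^k]=\PP[j(k)\ge i]=P_i$, which converts the finite sums $\sum_{i=1}^{j(k)}$ into infinite sums $\sum_{i=1}^\infty P_i(\cdot)$. (4) For the $\n{x^k}^2$ coefficient $1-\frac{\eta^k}{m}(1-r^2)(1-\eta^k D_{j(k)})$, use $D_{j(k)}\le\sum_{i\ge1}1/\del_i$... but that is not tight enough; instead take the expectation in $j(k)$ directly: $\EE[D_{j(k)}]=\EE[\sum_{i=1}^{j(k)}1/\del_i]=\sum_{i\ge1}P_i/\del_i=1/\eta_2$, which is exactly why $\eta_2$ is defined via $(\sum P_i/\del_i)^{-1}$. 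This yields the coefficient $1-\frac{\eta^k}{m}(1-r^2)(1-\eta^k/\eta_2)=R(\eta^k,\eta_2)$ after pulling $\EE$ inside (valid since $\eta^k$ is $\cG^k$-measurable and deterministic given $\cG^k$, and $\eta^k D_{j(k)}\ge0$ so the bound only improves). (5) Collect the backward-difference terms: from Lemma~\ref{lem:E-Norm-Fk} each gets coefficient $\frac1m(\del_i(1-r^2)+\eps_i)P_i$, and from $\xi^{k+1}$'s own sum each gets $\frac1m c_{i+1}$; summing gives $\frac1m((\eps_i+(1-r^2)\del_i)P_i+c_{i+1})$, matching the statement.

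\textbf{Main obstacle.} The only genuinely delicate point is step~(4): one must be careful that the map $j\mapsto 1-\frac{\eta^k}{m}(1-r^2)(1-\eta^k D_j)$ is concave (indeed affine) in $D_j$, so that $\EE$ over $j(k)$ can be brought inside to replace $D_{j(k)}$ by its mean $1/\eta_2$ — here it is affine in $D_j$, so this is an equality, not merely an inequality, and no Jensen slack is lost; but one must also confirm $1-\eta^k D_{j(k)}$ can legitimately be handled this way without sign issues (it can be negative, but the product with the overall $-\frac{\eta^k}{m}(1-r^2)$ and the independence still make the identity $\EE[\text{coefficient}\cdot\n{x^k}^2\mid\cG^k]=R(\eta^k,\eta_2)\n{x^k}^2$ exact). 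Everything else is bookkeeping: re-indexing the Lyapunov sum, discarding the manifestly nonpositive $\n{S\hx^k}^2$ block, and invoking independence to turn $\sum_{i=1}^{j(k)}$ into $\sum_{i=1}^\infty P_i$. A secondary subtlety worth a sentence is verifying that $\eta^k(1+E_{j(k)})\le1$ (needed to drop the $\n{S\hx^k}^2$ term) follows from $\eta^k\le\eta_1$ and $E_{j(k)}\le\|1/\eps_i\|_{\ell^1}$ uniformly in the realization of $j(k)$ — this is where finiteness of $\|1/\eps_i\|_{\ell^1}$ (hence of $M_1$-type quantities) is used.
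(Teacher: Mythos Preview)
Your proposal is correct and follows essentially the same route as the paper: decompose $\EE[\xi^{k+1}\mid\cF^k]$ into the $\n{x^{k+1}}^2$ piece (bounded via \Cref{lem:E-Norm-Fk}), the $c_1$-term $\frac{c_1}{m}\cdot\frac{(\eta^k)^2}{m}\n{S\hx^k}^2$, and the $\cF^k$-measurable tail $\frac1m\sum c_{i+1}\n{x^{k+1-i}-x^{k-i}}^2$; use $\eta^k\le\eta_1$ together with $E_{j(k)}\le\n{1/\eps_i}_{\ell^1}$ to discard the aggregated $\n{S\hx^k}^2$ block; then pass to $\cG^k$ via the tower property and the identity $\EE[\sum_{i=1}^{j(k)}\gamma_i\mid\cG^k]=\sum_{i\ge1}P_i\gamma_i$, which simultaneously produces the $P_i$ weights on the difference terms and turns $D_{j(k)}$ into $1/\eta_2$ in the $\n{x^k}^2$ coefficient. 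Your extra remarks on affinity in $D_j$ and sign considerations are sound but not needed beyond what the paper does, since the replacement $\EE[D_{j(k)}\mid\cG^k]=\sum_i P_i/\del_i$ is an exact identity under the independence assumption.
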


Notice that we have defined $\eta_1$ and $\eta_2$ in terms of the unspecified parameters $\seq{\eps}$ and $\seq{\del}$. Eventually, we will set $\eps_i=m^{1/2}P_i^{-1/2}\rho^{i/2}$ and $\del_i = m^{1/2}(1-r^2)^{-1/2}\rho^{i/2}$ for reasons that will be explained in \Cref{ssec:Proof-of-theorem-stochastic}. With this parameter choice, the definitions of $\eta_1$ and $\eta_2$ will match \cref{eq:def:eta1-final} and \cref{eq:def:eta2-final} respectively.

\begin{proof}
%
\begin{align}
\EE\sp{\xi^{k+1}\big|\cF^{k}}&=\underbrace{\EE\sp{\n{x^{k+1}}^{2}\big|\cF^{k}}}_{A}+\underbrace{\frac{c_{1}}{m}\EE\sp{\n{x^{k+1}-x^{k}}^{2}\big|\cF^{k}}}_{B}+\underbrace{\frac{1}{m}\sum_{i=1}^{\infty}c_{i+1}\n{x^{k+1-i}-x^{k-i}}^{2}}_{C}
\end{align}
We obtain a bound on $A$ from \Cref{lem:E-Norm-Fk}. $B$ follows by the definition of ARock:
\begin{align*}
B & =\frac{c_{1}}{m}\frac{\p{\eta^{k}}^{2}}{m}\n{S\hx^{k}}^{2}.
\end{align*}
$C$ contains no expectation because it is $\cF^k$ measurable. Hence we have:
\begin{align}
\begin{aligned}
\EE\sp{\xi^{k+1}\big|\cF^{k}} &\leq\n{x^{k}}^{2}\p{1-\frac{\eta^{k}}{m}\p{1-r^{2}}\p{1-\eta^{k}\p{D_{j(k)}}}}
-\frac{\eta^{k}}{m}\n{S\hx^{k}}^{2}\p{1-\eta^{k}\p{1+\frac{c_{1}}{m}+E_{j(k)}}}\\
&+\frac{1}{m}\sum_{i=1}^{j(k)}\p{\eps_{i}+\p{1-r^{2}}\delta_{i}}\n{x^{k+1-i}-x^{k-i}}^{2}+\frac{1}{m}\sum_{i=1}^{\infty}c_{i+1}\n{x^{k+1-i}-x^{k-i}}^{2}
\end{aligned}\label{eq:Branch-point}
\end{align}
Notice that $E_{j}=\sum_{i=1}^{j}1/\eps_{i}\leq\n{1/\eps_{i}}_{\ell^{1}}$ for all $j$, and that therefore the step size condition eliminates the $\n{S\hx^{k}}^{2}$
term.

Now it becomes necessary to take expectations over the delay distribution (by taking the expectation with respect to $\cG^{k}$ instead of $\cF^k$). Notice that for a sequence $\p{\gamma_{1},\gamma_{2},\ldots}$, we have: $\EE\sp{\sum_{i=1}^{j(k)}\gamma_{i}\big|\cG^{k}}=\sum_{i=1}^{\infty}P_{i}\gamma_{i}$. This yields:
\begin{align*}
\EE\sp{\xi^{k+1}\big|G^{k}} & \leq\n{x^{k}}^{2}\p{1-\frac{\eta^{k}}{m}\p{1-r^{2}}\p{1-\eta^{k}\p{\sum_{i=1}^{\infty}\frac{P_{i}}{\del_{i}}}}}\\
 & +\frac{1}{m}\sum_{i=1}^{\infty}\p{\eps_{i}+\p{1-r^{2}}\delta_{i}}P_{i}\n{x^{k+1-i}-x^{k-i}}^{2}+\frac{1}{m}\sum_{i=1}^{\infty}c_{i+1}\n{x^{k+1-i}-x^{k-i}}^{2}
\end{align*}
which completes the proof.
\end{proof}

\subsection{Linear convergence} \label{ssec:Linear-convergence-stochastic}
The right-hand side in \Cref{lem:Expectation-xi-Gk-stochastic} closely resembles $\xi^k$. Ideally, we have:
\begin{align}
\EE\sp{\xi^{k+1}\big|\cG^{k}} &\leq \gamma \xi^k
\end{align}
for $0<\gamma<1$, and some choice of parameters $(\eps_1,\eps_2,\ldots)$, $(\del_1,\del_2,\ldots)$ and coefficients $(c_1,c_2,\ldots)$. In this section we will derive such a result by carefully chosing these parameters. However, in order to derive a coefficient formula, we need the following lemma.

\begin{lem}[Coefficient formula] \label{lem:Coefficient-formula}
Let $0<\rho<1$ and let $(s_1,s_2,\ldots)$ be a positive sequence. Consider the coefficient formula:
\begin{align}
c_i &= \sum^\infty_{l=i}s_l \rho^{-(l-i+1)}.
\end{align}
If $c_1<\infty$, then we have $c_i\downarrow 0$ and:
\begin{align}
\rho c_i &= c_{i+1} + s_i
\end{align}
\end{lem}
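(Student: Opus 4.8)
The statement is essentially a bookkeeping identity for geometric-type tails, so the plan is a direct verification in three short movements: first the recurrence, then finiteness of every $c_i$, then the monotone decrease to $0$.

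First I would rewrite the defining series with an explicit power of $\rho$ factored out: since every summand is nonnegative, rearrangement is unrestricted, and
\begin{align}
c_i &= \sum_{l=i}^\infty s_l\,\rho^{-(l-i+1)} = \rho^{\,i-1}\sum_{l=i}^\infty s_l\,\rho^{-l}.
\end{align}
In particular $c_1 = \sum_{l=1}^\infty s_l\rho^{-l}$, and the hypothesis $c_1<\infty$ says exactly that this series converges. Because all terms are positive, the tail $\sum_{l=i}^\infty s_l\rho^{-l}$ is bounded by $c_1$ and hence finite for every $i$, so $c_i<\infty$ for all $i$; moreover, being the tail of a convergent series, $\sum_{l=i}^\infty s_l\rho^{-l}\to 0$ as $i\to\infty$, and since $\rho^{\,i-1}\to 0$ as well, we get $c_i\to 0$.

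Next, the recurrence. Multiplying the definition of $c_i$ by $\rho$ and shifting the exponent,
\begin{align}
\rho c_i &= \sum_{l=i}^\infty s_l\,\rho^{-(l-i)} = s_i + \sum_{l=i+1}^\infty s_l\,\rho^{-(l-i)} = s_i + \sum_{l=i+1}^\infty s_l\,\rho^{-(l-(i+1)+1)} = s_i + c_{i+1},
\end{align}
which is the claimed identity $\rho c_i = c_{i+1}+s_i$. Finally, since $s_l>0$ we have $c_i>0$, and the recurrence gives $c_{i+1} = \rho c_i - s_i < \rho c_i < c_i$ (using $0<\rho<1$ and $c_i>0$), so $(c_i)$ is strictly decreasing; combined with $c_i>0$ and $c_i\to 0$ this is exactly $c_i\downarrow 0$.

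There is no real obstacle here: the only point that needs a word of care is justifying the index shift and the term-by-term bound on the tail, both of which are immediate because every $s_l\rho^{-(l-i+1)}$ is nonnegative, so all the manipulations are on convergent series of nonnegative terms. (If one prefers, the convergence $c_i\to 0$ can alternatively be obtained by noting a monotone bounded sequence has a limit $L\ge 0$, observing $s_l\rho^{-l}\to 0$ forces $s_l\to 0$, and passing to the limit in $\rho c_i = c_{i+1}+s_i$ to get $\rho L = L$, hence $L=0$; either route is routine.)
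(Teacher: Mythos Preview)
Your proof is correct and follows essentially the same route as the paper's: both establish the recurrence $\rho c_i = c_{i+1}+s_i$ by the identical index-shift computation $\rho c_i = \sum_{l=i}^\infty s_l\rho^{-(l-i)} = s_i + c_{i+1}$, and then read off $c_i\downarrow 0$ from the recurrence together with $0<\rho<1$ and nonnegativity. The only difference is that the paper compresses the monotone-decrease-to-zero step into a single ``clearly'' while you spell out the finiteness of each $c_i$, the strict decrease $c_{i+1}<\rho c_i<c_i$, and the limit argument explicitly.
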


\begin{proof}
\begin{align*}
\rho c_{i} & =\sum_{l=i}^{\infty}s_{l}\rho^{-\p{l-i}} =\sum_{l=i+1}^{\infty}s_{l}\rho^{-\p{l-\p{i+1}+1}}+s_{i} =c_{i+1}+s_{i}
\end{align*}
Clearly this implies $c_i\downarrow 0$, since $\rho<1$ and coefficients are nonnegative.
\end{proof}

Recall that $\rho$ is defined in \cref{eq:def:rho}.

\begin{prop}[Linear convergence for stochastic delays] \label{pro:Linear-convergence-general-stochastic}
Let \Cref{asmp:Block-sequence} hold. Let $\eta^k\leq \eta_1$, and let $\eps_{1},\eps_{2},\ldots>0$ and $\del_{1},\del_{2},\ldots>0$
be a sequence of parameters. Let
\begin{align}
\sum^\infty_{l=i}\p{\eps_l+\p{1-r^2}\delta_l }P_l\rho^{-l} < \infty\label{eq:General-moment-finite-stochastic}
\end{align}

With the choice of coefficients\footnote{This formula will eventually match \cref{eq:thm:Coefficients-stochastic} when the parameters $\eps_{1},\eps_{2},\ldots>0$ and $\del_{1},\del_{2},\ldots>0$ are chosen later.}:
\begin{align}
c_i &= \sum^\infty_{l=i}\p{\eps_l+\p{1-r^2}\delta_l }P_l\rho^{-(l-i+1)}\label{eq:Coefficient-formula}
\end{align}
We have the following linear convergence result:
\begin{align}
\EE\sp{\xi^{k+1}\big|\cG^{k}} &\leq R\p{\eta^k,\eta_2} \xi^k
\end{align}

\end{prop}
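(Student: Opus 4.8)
The plan is to combine the per-step inequality from \Cref{lem:Expectation-xi-Gk-stochastic} with the algebraic identity from \Cref{lem:Coefficient-formula} to show that the messy right-hand side collapses exactly into $R(\eta^k,\eta_2)\,\xi^k$. The starting point is the bound
\begin{align*}
\EE\sp{\xi^{k+1}\big|\cG^{k}} & \leq\n{x^{k}}^{2}R\p{\eta^k,\eta_2}+\frac{1}{m}\sum_{i=1}^{\infty}\p{\p{\eps_{i}+\p{1-r^{2}}\delta_{i}}P_{i}+c_{i+1}}\n{x^{k+1-i}-x^{k-i}}^{2},
\end{align*}
which requires $\eta^k\leq\eta_1$ (to kill the $\n{S\hx^k}^2$ term) and the definition of $\eta_2$ in terms of $\seq{\del}$. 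First I would set $s_l = (\eps_l+(1-r^2)\del_l)P_l$ and observe that \Cref{eq:Coefficient-formula} is precisely the coefficient formula of \Cref{lem:Coefficient-formula} with this choice of $s_l$; the summability hypothesis \Cref{eq:General-moment-finite-stochastic} is exactly the condition $c_1<\infty$ needed to invoke that lemma. Hence \Cref{lem:Coefficient-formula} gives $\rho c_i = c_{i+1} + s_i$, i.e. $(\eps_i+(1-r^2)\del_i)P_i + c_{i+1} = \rho c_i$ for every $i\geq 1$, and moreover $c_i\downarrow 0$ so the Lyapunov function is well-defined with positive decreasing coefficients.

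Substituting this identity into the bound above, the double-error sum becomes $\frac{1}{m}\sum_{i=1}^{\infty}\rho c_i \n{x^{k+1-i}-x^{k-i}}^2 = \rho\cdot\frac{1}{m}\sum_{i=1}^{\infty} c_i \n{x^{k+1-i}-x^{k-i}}^2$, which is exactly $\rho$ times the asynchronicity-error part of $\xi^k$. For the classical-error part, I would use the remark recorded just before \Cref{thm:Linear-convergence-stochastic-delays-full} (and restated after \Cref{eq:def:R2}) that $\rho \leq R(\eta,\eta_2)$ whenever $\eta\leq 1$; since $\eta^k\leq\eta_1\leq 1$, we get $\rho\,\n{x^k}^2 \leq R(\eta^k,\eta_2)\,\n{x^k}^2$. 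Putting the two pieces together,
\begin{align*}
\EE\sp{\xi^{k+1}\big|\cG^{k}} & \leq R\p{\eta^k,\eta_2}\n{x^{k}}^{2} + \rho\cdot\frac{1}{m}\sum_{i=1}^{\infty} c_i \n{x^{k+1-i}-x^{k-i}}^2 \leq R\p{\eta^k,\eta_2}\,\xi^k,
\end{align*}
which is the claimed result.

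The step I expect to be the genuine crux is recognizing that the coefficient formula \Cref{eq:Coefficient-formula} is engineered precisely so that the ``$\rho c_i = c_{i+1}+s_i$'' telescoping works — i.e. the forward-looking design choice of the coefficients, rather than the verification, is where the real content lies. The remaining points to be careful about are minor: checking that the one-sided comparison $\rho\leq R(\eta^k,\eta_2)$ is indeed available for all $\eta^k$ in the allowed range (it follows from the explicit formula \Cref{eq:def:R2} for $R$ together with $\eta^k\leq 1$ and $\eta_2>0$), confirming that the index shift in $\sum_{i=1}^\infty c_{i+1}\n{\cdots}^2$ versus $\sum_{i=1}^\infty c_i\n{\cdots}^2$ matches up with $\xi^k$'s definition in \Cref{eq:def:Xi}, and noting that finiteness of $c_1$ legitimizes all the interchanges of summation. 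None of these require real work once the coefficient identity is in hand.
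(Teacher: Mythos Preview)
Your proposal is correct and follows exactly the paper's proof: apply \Cref{lem:Coefficient-formula} with $s_l=(\eps_l+(1-r^2)\del_l)P_l$ to get $\rho c_i=c_{i+1}+s_i$, substitute into the bound from \Cref{lem:Expectation-xi-Gk-stochastic}, and use $\rho\leq R(\eta^k,\eta_2)$ (valid since $\eta^k\leq\eta_1\leq 1$) to absorb both pieces into $R(\eta^k,\eta_2)\xi^k$. One small wording slip: the inequality $\rho\leq R(\eta^k,\eta_2)$ is needed to upgrade the \emph{asynchronicity}-error coefficient from $\rho$ to $R$, not the classical-error term (which already carries $R$); your final displayed chain is nonetheless correct.
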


\begin{proof}
By applying \Cref{lem:Coefficient-formula} with $s_l=P_l(\eps_{l}+\p{1-r^{2}}\delta_{l})$, we obtain:
\begin{align*}
\rho c_{i} &=\p{\eps_{i}+\p{1-r^{2}}\delta_{i}}P_{i}+c_{i+1}
\end{align*}
Hence from \Cref{lem:Expectation-xi-Gk-stochastic}, we have:
\begin{align*}
\EE\sp{\xi^{k+1}\big|\cG^{k}} & \leq\n{x^{k}}^{2}R\p{\eta^k,\eta_2}+\frac{1}{m}\sum_{i=1}^{\infty}\p{\p{\eps_{i}+\p{1-r^{2}}\delta_{i}}P_{i}+c_{i+1}}\n{x^{k+1-i}-x^{k-i}}^{2}\\
& \leq\n{x^{k}}^{2}R\p{\eta^k,\eta_2}+\frac{1}{m}\sum_{i=1}^{\infty}\rho c_{i}\n{x^{k+1-i}-x^{k-i}}^{2}
 \leq\max\p{\rho,R\p{\eta^k,\eta_2}}\xi^{k} = \rho\xi^k\qedhere
\end{align*}
The last line follows, because $0\leq\eta^k\leq\eta_1\leq1$ implies $\rho\leq R(\eta^k,\eta_2)$.
\end{proof}

\subsection{Proof of \Cref{thm:Linear-convergence-stochastic-delays-full}} \label{ssec:Proof-of-theorem-stochastic}
Recall we have the following step size restriction $\eta^k\leq \eta_1$ (with $\eta_1$ defined in \cref{eq:def:eta1-general}) coupled with the convergence rate:
\begin{align*}
R\p{\eta^k,\eta_2}&=\p{1-\frac{\eta^k}{m}\p{1-r^{2}}\p{1-\eta^k/\eta_{2}}}\text{, for } \eta_{2} =\p{\sum_{i=1}^{\infty}\frac{P_{i}}{\del_{i}}}^{-1}
\end{align*}

We now prove \Cref{thm:Linear-convergence-stochastic-delays-full}. However we do so in a way that justifies the choice of parameters that we use. In the case of $\seq{\eps}$ there is a best choice. For other parameters, we simply pick a sensible though not necessarily optimal choice. First though, a couple of lemmas are needed to look at the asymptotic convergence rate and iteration complexity.

\begin{lem}\label{lem:Asymptotic-step-size-convergence-rate}
Say that as $m\to\infty$, we have $x(m)=\cO(1)$, $y(m)=\cO(1)$, and:
\begin{align*}
t_{1} & =1+x(m)m^{-a}+o\p{m^{-a}},\,1>a>0\\
t_{2}^{-1} & =y(m)m^{-b}+o\p{m^{-b}},\,1>b>0
\end{align*}
Then
\begin{align*}
R\p{t_{1},t_{2}} & =1-\frac{1}{m}\p{1-r^{2}}\p{1-\p{x(m)m^{-a}+y(m)m^{-b}}+o\p{m^{-a}+m^{-b}}}
\end{align*}
\end{lem}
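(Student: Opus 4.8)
The plan is a direct asymptotic expansion of the scalar $R(t_1,t_2)$, with no difficulty beyond bookkeeping the little-$o$ remainders. Starting from the definition $R(\eta,\gamma)=1-\frac{\eta}{m}\p{1-r^2}\p{1-\eta/\gamma}$ recalled in \eqref{eq:def:R2}, I would substitute $\eta=t_1$ and $\gamma=t_2$ to write $R(t_1,t_2)=1-\frac{1}{m}\p{1-r^2}\,t_1\p{1-t_1 t_2^{-1}}$. Since $R$ itself is an exact rational expression, the only approximation enters through $t_1$ and $t_2^{-1}$, so the whole task reduces to expanding the scalar $t_1\p{1-t_1 t_2^{-1}}$ to first order in $m^{-a}$ and $m^{-b}$ and absorbing everything else into $o\p{m^{-a}+m^{-b}}$ (noting $m^{-a}+m^{-b}\asymp m^{-\min(a,b)}$, so this error term is unambiguous).

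First I would expand $t_1 t_2^{-1}$. Because $x(m)=\cO\p1$, the deviation $t_1-1=x(m)m^{-a}+o\p{m^{-a}}$ is $\cO\p{m^{-a}}$, so $t_1 t_2^{-1}=t_2^{-1}+(t_1-1)t_2^{-1}=t_2^{-1}+\cO\p{m^{-a}}\cdot\cO\p{m^{-b}}=y(m)m^{-b}+o\p{m^{-b}}$, the cross term being absorbed since $m^{-a-b}=o\p{m^{-b}}$ once $a>0$. Hence $1-t_1 t_2^{-1}=1-y(m)m^{-b}+o\p{m^{-b}}$. Next I would multiply this by $t_1=1+x(m)m^{-a}+o\p{m^{-a}}$ and collect: the constant term is $1$, the two surviving first-order terms are the $m^{-a}$ contribution carried by the linear part of $t_1$ and the $-y(m)m^{-b}$ from the second factor, and every remaining contribution is a bounded quantity times one of $o\p{m^{-a}}$, $o\p{m^{-b}}$, or $m^{-a-b}$ — each of which is $o\p{m^{-a}+m^{-b}}$ since $0<a,b<1$ and $x(m),y(m)=\cO\p1$. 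Substituting the resulting expansion of $t_1\p{1-t_1 t_2^{-1}}$ back into $R(t_1,t_2)=1-\frac{1}{m}\p{1-r^2}\,t_1\p{1-t_1 t_2^{-1}}$ yields the stated identity; the sign attached to the $x(m)m^{-a}$ term inside the parenthesis is inherited directly from the sign in the hypothesis on $t_1$.

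The only step that needs genuine care — it is not really an obstacle — is the verification in the previous paragraph that each discarded term is truly $o\p{m^{-a}+m^{-b}}$. This rests on two elementary observations: $m^{-a-b}=m^{-a}m^{-b}=o\p{m^{-\min(a,b)}}$ because the complementary factor tends to $0$ (this is exactly where $a>0$ and $b>0$ are used), and $\cO\p1\cdot o\p{m^{-a}}=o\p{m^{-a}}$, which is where the hypotheses $x(m)=\cO\p1$ and $y(m)=\cO\p1$ are needed. Everything else is mechanical algebra.
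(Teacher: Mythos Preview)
Your approach is correct and identical to the paper's: both simply write $R(t_1,t_2)=1-\frac{1}{m}(1-r^2)\,t_1(1-t_1 t_2^{-1})$, expand $t_1(1-t_1 t_2^{-1})$ to first order, and absorb all cross and remainder terms into $o(m^{-a}+m^{-b})$. Your closing caveat about the sign is well placed: with the hypothesis $t_1=1+x(m)m^{-a}+o(m^{-a})$ the expansion actually gives $1+x(m)m^{-a}-y(m)m^{-b}$ inside the parenthesis, and indeed the paper's own proof (and the applications that follow) silently use $t_1=1-x m^{-a}+o(m^{-a})$, which is the version that matches the stated conclusion.
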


\begin{proof}
\begin{align*}
R\p{t_{1},t_{2}} & =1-\frac{1}{m}\p{1-r^{2}}t_{1}\p{1-t_{1}t_{2}^{-1}}\\
t_{1}\p{1-t_{1}t_{2}^{-1}} & =\p{1-xm^{-a}+o\p{m^{-a}}}\p{1-\p{1-xm^{-a}+o\p{m^{-a}}}\p{ym^{-b}+o\p{m^{-b}}}}\\
 & =\p{1-xm^{-a}+o\p{m^{-a}}}\p{1-\p{1+\cO\p{m^{-a}}}\p{ym^{-b}+o\p{m^{-b}}}}\\
 & =\p{1-xm^{-a}+o\p{m^{-a}}}\p{1-ym^{-b}+o\p{m^{-b}}}\\
 & =1-xm^{-a}-ym^{-b}+o\p{m^{-a}+m^{-b}}\\
R\p{t_{1},t_{2}} & =1-\frac{1}{m}\p{1-r^{2}}\p{1-\p{xm^{-a}+ym^{-b}}+o\p{m^{-a}+m^{-b}}}\qedhere
\end{align*}
\end{proof}

\begin{lem} \label{lem:Asymptotic-iteration-complexity}
Let $x(m)=\cO(1)$, and $0\leq a$. If the linear convergence rate $R$ satisfies:
\begin{align}
R=1-\frac{1}{m}\p{1-r^{2}}\p{1-(xm^{-a}+o\p{m^{-a}})}
\end{align}
as $m\to\infty$. Then we have:
\begin{align}
I\p \eps &= \p{1+xm^{-a}+o\p{m^{-a}}}\p{\frac{1}{1-r^{2}}}\ln\p{1/\eps}
\end{align}

\end{lem}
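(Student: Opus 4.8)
The plan is to convert the one-step linear rate into an epoch count and then expand that count asymptotically in $m$. First I would iterate the rate hypothesis: by the preceding results $\EE\sp{\xi^{k+1}\mid\cG^{k}}\le R\,\xi^{k}$, so the tower property gives $\EE\sp{\xi^{k}}\le R^{k}\xi^{0}$, and the (Lyapunov) error drops below $\eps$ times its initial value after $\ln\p{1/\eps}/\ln\p{1/R}$ iterations. Since one epoch is $m$ ARock iterations in this $p=1$ block regime, the number of epochs is
\[
I\p{\eps}=\frac{\ln\p{1/\eps}}{m\ln\p{1/R}}\,\p{1+o\p 1}\qquad(m\to\infty),
\]
the $o(1)$ absorbing the harmless integer-rounding correction, which is $\cO\p{1/m}$. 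So the whole lemma comes down to an asymptotic expansion of $m\ln\p{1/R}$.

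Second, I would write $R=1-w_{m}$ with $w_{m}=\frac{1}{m}\p{1-r^{2}}\bigl(1-(x m^{-a}+o\p{m^{-a}})\bigr)$ (recall $x=\cO(1)$), noting $w_{m}\to 0$. The standard expansion $-\ln\p{1-w_{m}}=w_{m}+\tfrac12 w_{m}^{2}+\cO\p{w_{m}^{3}}=w_{m}\bigl(1+\cO\p{1/m}\bigr)$ then gives
\[
m\ln\p{1/R}=m w_{m}\bigl(1+\cO\p{1/m}\bigr)=\p{1-r^{2}}\bigl(1-x m^{-a}+o\p{m^{-a}}\bigr)\bigl(1+\cO\p{1/m}\bigr)=\p{1-r^{2}}\bigl(1-x m^{-a}+o\p{m^{-a}}\bigr),
\]
the last equality using $1/m=o\p{m^{-a}}$. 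This is the only place the hypothesis on $a$ bites: the quadratic term $\tfrac12 w_{m}^{2}$ of the logarithm contributes an $\cO\p{1/m}$ relative correction, which is dominated by the $m^{-a}$ scale precisely because $a<1$ (in our applications $a=\tfrac12$).

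Finally, since $x m^{-a}+o\p{m^{-a}}\to 0$, the geometric series $1/(1-t)=1+t+\cO\p{t^{2}}$ with $t=x m^{-a}+o\p{m^{-a}}$ and $t^{2}=\cO\p{m^{-2a}}=o\p{m^{-a}}$ (valid since $a>0$) yields
\[
I\p{\eps}=\frac{\ln\p{1/\eps}}{1-r^{2}}\cdot\frac{1}{1-(x m^{-a}+o\p{m^{-a}})}=\bigl(1+x m^{-a}+o\p{m^{-a}}\bigr)\,\frac{\ln\p{1/\eps}}{1-r^{2}},
\]
which is the claimed formula. No step is technically deep; the one thing to be careful about is the order-of-magnitude bookkeeping — in particular checking that both the second-order term of $-\ln\p{1-w_{m}}$ and the rounding from a real iteration count to an integer epoch count sit at scale $1/m=o\p{m^{-a}}$, so that the asymptotic coefficient of $\ln\p{1/\eps}$ is governed entirely by the leading $x m^{-a}$ term.
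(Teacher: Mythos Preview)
Your proof is correct and follows essentially the same approach as the paper: set $\eps=R^{I(\eps)m}$, expand $-\ln R=-\ln(1-w_m)=w_m+\cO(w_m^2)$ with $w_m=\frac{1}{m}(1-r^2)(1-xm^{-a}+o(m^{-a}))$, and invert $1/(1-xm^{-a}+o(m^{-a}))$ via the geometric series. You are in fact slightly more explicit than the paper about where the conditions on $a$ are used (the paper's statement allows $a\ge 0$ but, as you observe, the argument really needs $0<a<1$ so that both the $\cO(1/m)$ log correction and the $\cO(m^{-2a})$ inversion remainder are $o(m^{-a})$).
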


\begin{proof}
%
\begin{align*}
\eps &=\p{R\p{\eta_{1},\eta_{2}}}^{I(\eps)m}\\
\ln\p{1/\eps} & =-I(\eps)m\ln\p{R\p{\eta_{1},\eta_{2}}}\\
 & =I(\eps)m\p{\frac{1}{m}\p{1-r^{2}}\p{1-xm^{-a}+o\p{m^{-a}}}+\cO\p{\p{\frac{1}{m}\p{1-r^{2}}\p{1-xm^{-a}+o\p{m^{-a}}}}^{2}}}\\
 & =I(\eps)m\p{\frac{1}{m}\p{1-r^{2}}\p{1-xm^{-a}+o\p{m^{-a}}}+\cO\p{\p{\frac{1}{m}\p{1-r^{2}}\cO\p 1}^{2}}}\\
 & =I(\eps)m\p{\frac{1}{m}\p{1-r^{2}}\p{1-xm^{-a}+o\p{m^{-a}}}}\\
I(\eps) & =\p{1+xm^{-a}+o\p{m^{-a}}}\p{\frac{1}{1-r^{2}}}\ln\p{1/\eps}\qedhere
\end{align*}
\end{proof}

\begin{proof}[Proof of \Cref{thm:Linear-convergence-stochastic-delays-full}]
We start with the conditions of \Cref{pro:Linear-convergence-general-stochastic}. $M_1$ and $M_2$ being finite corresponds to \Cref{eq:General-moment-finite-stochastic} in \Cref{pro:Linear-convergence-general-stochastic}.

It is immediately possible to maximize $\eta_{1}$ over the sequence $\eps_{i}$, by letting $\eps_{i}=\sqrt{m}P_{i}^{-1/2}\rho^{i/2}$. All thing being equal, increasing $\eta_{1}$ allows for a better convergence rate by increasing the range of possible step sizes. This leads to:
\begin{align}
\eta_{1} & =\p{1+m^{-1}\p{1-r^{2}}\sum_{l=1}^{\infty}\del_{l}P_{l}\rho^{-l}+2m^{-1/2}\sum_{l=1}^{\infty}P_{l}^{1/2}\rho^{-l/2}}^{-1}
\end{align}
and leaves $\eta_{2}$ unchanged. We also let $\del_{l}=d\rho^{l/2}$,
with $d$ to be determined later. This yields:
\begin{align*}
\eta_{1} & =\p{1+m^{-1}\p{1-r^{2}}dM_{1}+2m^{-1/2}M_{2}}^{-1}, &\eta_{2}^{-1} & =d^{-1}M_1
\end{align*}
for $M_{1} =\sum_{l=1}^{\infty}P_{l}\rho^{-l/2}$, and  $M_{2}  =\sum_{l=1}^{\infty}P_{l}^{1/2}\rho^{-l/2}$. Now we set $d=am^{1/2}\p{1-r^{2}}^{-1/2}$, for $a$ to be determined later. We make this choice so that that $\eta_1$ and $\eta_2$ are both $1+\cO\p{m^{q-1/2}}$ for large $m$, which optimizes the asymptotic rate at which $\eta_1$ converges to $1$. Recall that the moments $M_1$ and $M_2$ vary with $m$, and satisfy $M_1, M_2 = \cO\p{m^q}$ for $0\leq q < 1/2$. This yields:
\begin{align*}
\eta_{1} & =\p{1+am^{-1/2}\p{1-r^{2}}^{1/2}M_{1}+2m^{-1/2}M_{2}}^{-1}\\
 & =1-m^{-1/2}\p{a\p{1-r^{2}}^{1/2}M_{1}+2M_{2}}+\cO\p{m^{2q-1}}\\
\eta_{2} & =a^{-1}m^{-1/2}\p{1-r^{2}}^{1/2}M_{1}
\end{align*}
It is clear that $M_1$, $M_2$ and $c_i$ match the formulas given in the \Cref{thm:Linear-convergence-stochastic-delays-full} (see \cref{eq:def:M}, and \cref{eq:thm:Coefficients-stochastic}). We now use \Cref{lem:Asymptotic-step-size-convergence-rate} with $t_{1}=\eta_{1}$, $t_{2}=\eta_{2}$, $a=b=q-\frac{1}{2}$, etc.
\begin{align*}
R\p{\eta_{1},\eta_{2}} & =1-\frac{1}{m}\p{1-r^{2}}\p{1-m^{-1/2}\p{\p{a+a^{-1}}\p{1-r^{2}}^{1/2}M_{1}+2M_{2}}+o\p{m^{-1/2+q}}}
\end{align*}
Clearly to minimize the lowest order asynchronicity penalty, we should let $a=1$. With this choice, we have proven \Cref{eq:thm:Linear-convergence-stochastic}. Then using this in conjunction with \Cref{lem:Asymptotic-iteration-complexity} completes the proof of \Cref{thm:Linear-convergence-stochastic-delays-full}.
\end{proof}

\section{Deterministic Unbounded Delays} \label{sec:Proof-for-Deterministic-Delays}
We now present a convergence result for deterministic unbounded delays.

\begin{asmp}[Deterministic unbounded delays] \label{asmp:Deterministic-delays}
The sequence of delay vectors $\vec{j}(0), \vec{j}(1), \vec{j}(2),\ldots$ is an arbitrary sequence in $\NN^m$.
\end{asmp}

If there can be no assumption made on the distribution of the delays, we a have a slightly weaker result. Whereas before it was possible to use a constant step size, here the step size and convergence rate at step $k$ depend on the current delay $j\p{k}$. Results that assume bounded delay $\tau$ need to know $\tau$ in advance to set the step size. If $\tau$ is very large, this will decrease the allowable timestep, which will slow convergence -- even if a delay of $\tau$ is very rare. Here we make no such assumption and the step size and convergence rate are adaptive to whatever delay conditions exist in the system. The step size $\eta^k$ is a function of the current delay $j(k)$, that must be measured (or upper bounded) at each iteration $k$.

The delay is allowed to be unbounded. The larger the delay, the less progress is made, but \textit{some progress is always made at every step}. We define a \textbf{good behavior boundary}
\begin{align}
T(m) &= bm^{q}+d
\end{align}
for $0\leq q<1/2$, for arbitrary\footnote{These parameters are arbitrary, but setting them involves a trade-off as we will later see. The larger $b$ is, the larger the asynchronicity penalty, and larger $m$ has to be to ensure negligible penalty as will be seen in \Cref{thm:Linear-convergence-deterministic-delays-full}.}  parameters $b,d>0$. Any delay less than this will not create a noticeable penalty in the progress of an algorithm in a large system (i.e. as $m\to\infty$) as compared to synchronous ARock. However if the delay is much larger than $T(m)$, the progress can eventually become vanishingly small. $T(m)$ can become very large since it grows as $\Theta\p{m^q}$.

We let $\rho$ again be defined as in \Cref{eq:def:rho}. For arbitrary\footnote{Again, $c$ can be set freely, but larger $c$ is, the more the convergence rate is penalized for exceeding $T(m)$. However the smaller $c$ is, the larger the asynchronicity penalty will be, and the large $m$ needs to be to ensure negligible penalty (much like $b$).} $c>0$, let
\begin{align}
\gamma&=\rho-cm^{-q}
\end{align}
We define the coefficients in the Lyapunov function from \Cref{eq:def:Xi}:
\begin{align}
c_i &= m^{1/2}\p{1+2\p{1-r^{2}}}\frac{\p{\gamma/\rho}^{i}}{1-\gamma/\rho}\label{eq:Coefficients-optimal-deterministic}
\end{align}
We also define the following step size functions:
\begin{align}
H_{1}\p j & =\left(1+c^{-1}m^{q-1/2}\p{3+\gamma^{-j}}\right)^{-1}, &&
H_{2}\p j =2cm^{\frac{1}{2}-q}\gamma^{j}\label{eq:def:H}
\end{align}
These functions play similar roles to $\eta_1$ and $\eta_2$ from \Cref{thm:Linear-convergence-stochastic-delays-full}. We again use the same convergence-rate function $R$ as defined in \Cref{eq:def:R}. Let $\eta^{k}$ be $\cF^{k}$ measurable.

\begin{thm} \label{thm:Linear-convergence-deterministic-delays-full}
Let \Cref{asmp:Block-sequence} and \Cref{asmp:Deterministic-delays} hold. Consider the Lyapunov function defined in \Cref{eq:def:Xi} with coefficients given by \Cref{eq:Coefficients-optimal-deterministic}. Let  $\eta^{k}\leq H_{1}\p j$. Then we have:
\begin{align}
\EE\sp{\xi^{k+1}\big|\cG^{k}} & \leq R\p{\eta^{k},H_{2}\p{j\p k}}\xi^{k}\label{eq:thm:Linear-convergence-deterministic-full}
\end{align}
Additionally, let $\eta^{k}=H_{1}\p{j\p k}$. If we have $j\p k\leq T\p m=bm^{q}+d$ for some $q\in[0,\frac{1}{2})$ and $b,d>0$, then the convergence rate satisfies:
\begin{align*}
R & \leq1-\frac{1}{m}\p{1-r^{2}}\p{1-\frac{1}{c}m^{q-1/2}\p{3+\frac{3}{2}\exp\p{bc}}+o\p{m^{q-\frac{1}{2}}}}
\end{align*}
which corresponds to iteration complexity:
\begin{align*}
I\p{\eps} & =\p{1+\frac{1}{c}m^{q-1/2}\p{3+\frac{3}{2}\exp\p{bc}}+o\p{m^{q-\frac{1}{2}}}}\p{\frac{1}{1-r^{2}}}\ln\p{1/\eps}
\end{align*}
\end{thm}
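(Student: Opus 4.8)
The plan is to run the proof of \Cref{thm:Linear-convergence-stochastic-delays-full} again, replacing the single expectation over the delay distribution by a deterministic bound that depends only on the current delay $j(k)$. Under \Cref{asmp:Deterministic-delays} every $\vec j(k)$ is fixed, so $\cF^k$ and $\cG^k$ coincide and no averaging is needed: \Cref{lem:E-Norm-Fk} together with the $B,C$ bookkeeping from the proof of \Cref{lem:Expectation-xi-Gk-stochastic} gives \Cref{eq:Branch-point} directly as a bound on $\EE[\xi^{k+1}\mid\cG^k]$, valid for any positive parameter sequences $(\eps_i)$, $(\del_i)$ and any $\cF^k$-measurable $\eta^k$. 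In that inequality the $\n{S\hx^k}^2$ term carries the factor $1-\eta^k(1+c_1/m+E_{j(k)})$, the $\n{x^k}^2$ coefficient is $1-\tfrac{\eta^k}{m}(1-r^2)(1-\eta^k D_{j(k)})$, and the coefficient of $\n{x^{k+1-i}-x^{k-i}}^2$ is $\tfrac1m c_{i+1}$ plus an extra $\tfrac1m(\eps_i+(1-r^2)\del_i)$ present only when $i\leq j(k)$. Everything hinges on choosing $(\eps_i)$, $(\del_i)$, $(c_i)$ so that these three expressions behave.

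For the first, killing the $\n{S\hx^k}^2$ term forces $\eta^k\leq(1+c_1/m+E_{j(k)})^{-1}$; taking $\eps_i$ geometric in $\gamma^{-1}$ makes $E_j=\sum_{i\leq j}1/\eps_i$ a constant multiple of $\gamma^{-j}$ up to lower-order terms, so this bound collapses to $\eta^k\leq H_1(j(k))$ from \Cref{eq:def:H}, the ``$3$'' absorbing the geometric-sum remainder and the $c_1/m$ piece. For the second, I want the $\n{x^k}^2$ coefficient to be at most $R(\eta^k,H_2(j(k)))$; comparing with \Cref{eq:def:R} this is exactly $D_{j(k)}=\sum_{i\leq j(k)}1/\del_i\leq 1/H_2(j(k))$, which pins $\del_i$ down geometrically and yields $H_2(j)=2cm^{1/2-q}\gamma^{j}$.

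The third expression is the delicate one and is where I expect the real work to lie. The history-term coefficient should be at most $\tfrac\rho m c_i$, but the extra piece $\eps_i+(1-r^2)\del_i$ is switched on only for $i\leq j(k)$, and $j(k)$ changes with $k$; hence the \emph{uniform} inequality $c_{i+1}+\eps_i+(1-r^2)\del_i\leq\rho\,c_i$ for all $i$ must be arranged — the deterministic replacement for the exact telescoping identity of \Cref{lem:Coefficient-formula}. One verifies that the explicit geometric sequence \Cref{eq:Coefficients-optimal-deterministic}, whose ratio is $\gamma/\rho$, satisfies this for $m$ large, essentially because $\rho^2>\gamma$ once $m$ is large enough. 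The genuinely fiddly part is making one fixed $(c_i)$ do three jobs at once: obey a recurrence strong enough to swallow the $k$-dependent truncated sums, keep $c_1/m$ small enough that $H_1$ keeps its stated form, and stay flat enough that the eventual asynchronicity penalty is only $\cO(m^{q-1/2})$ instead of something controlled by the worst-case delay. Granting all this, the history coefficients are $\leq\tfrac\rho m c_i\leq\tfrac1m R(\eta^k,H_2(j(k)))c_i$ (since $\rho\leq R(\eta^k,H_2(j(k)))$ whenever $\eta^k\leq1$, because $\eta^k(1-\eta^k/H_2(j(k)))\leq\eta^k\leq1$), and taking the maximum of the two bounds gives $\EE[\xi^{k+1}\mid\cG^k]\leq R(\eta^k,H_2(j(k)))\,\xi^k$, which is \Cref{eq:thm:Linear-convergence-deterministic-full}.

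Finally, for the asymptotic claim I specialize to $\eta^k=H_1(j(k))$ and invoke $j(k)\leq T(m)=bm^q+d$. The only non-bookkeeping step is the estimate $\gamma^{-j(k)}\leq\gamma^{-(bm^q+d)}=\exp\p{bc}\,(1+o(1))$, which holds because $\gamma=\rho-cm^{-q}=1-\Theta(m^{-q})$; this is where $\exp\p{bc}$ enters. Substituting gives $H_1(j(k))=1-\tfrac1c m^{q-1/2}\p{3+\exp\p{bc}}+o(m^{q-1/2})$ and $H_2(j(k))^{-1}=\cO(m^{q-1/2})$, and feeding these into $R(\eta^k,H_2(j(k)))$ — an analogue of \Cref{lem:Asymptotic-step-size-convergence-rate} with $a=b=\tfrac12-q$ — produces $3+\tfrac32\exp\p{bc}$ (the extra $\tfrac12\exp\p{bc}$ coming from the $H_1/H_2$ cross term), hence the stated bound on $R$; \Cref{lem:Asymptotic-iteration-complexity} then converts that into the claimed iteration complexity.
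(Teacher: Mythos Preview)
Your approach is essentially the paper's: start from \Cref{eq:Branch-point}, take $\eps_i,\del_i$ geometric in $\gamma$ (the paper uses $\eps_i=m^{1/2}\gamma^i$, $\del_i=2m^{1/2}\gamma^i$), dominate the resulting step-size functions $h_1,h_2$ by the explicit $H_1,H_2$, and then run the asymptotics with $\gamma^{-T}\to e^{bc}$; your $3+\tfrac32 e^{bc}$ bookkeeping is exactly what the paper does via \Cref{lem:Asymptotic-step-size-convergence-rate} and \Cref{lem:Asymptotic-iteration-complexity}.

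The one place you diverge is the coefficient recurrence, and there you overcomplicate matters. You frame $c_{i+1}+\eps_i+(1-r^2)\del_i\leq\rho c_i$ as a ``deterministic replacement'' for the stochastic identity that must be separately verified from the closed form \eqref{eq:Coefficients-optimal-deterministic}, and you invoke $\rho^2>\gamma$ for large $m$. In fact the paper applies the \emph{same} \Cref{lem:Coefficient-formula}, just with $s_i=\eps_i+(1-r^2)\del_i$ (no $P_i$ weight), obtaining the exact identity $\rho c_i=c_{i+1}+s_i$ with no side condition beyond $\gamma<\rho$ (automatic from $\gamma=\rho-cm^{-q}$) for convergence of the defining sum. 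The $k$-dependent truncation at $j(k)$ is handled trivially by extending $\sum_{i\leq j(k)}$ to $\sum_{i\geq1}$, since all summands are nonnegative; after that the identity telescopes cleanly. So the ``genuinely fiddly'' part you anticipate is not there: the first part of the theorem holds for all $m$, not just large $m$, and the condition $\rho^2>\gamma$ plays no role.
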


\begin{rem}
Much like in the stochastic delay case (see \Cref{rem:Free-parameters}), we prove a more general result where the Lyapunov function, and step size depend on a series of parameters $\eps_{1},\eps_{2},\ldots>0$
and $\del_{1},\del_{2},\ldots>0$. We make a sensible choice of these parameters to make the result more simple and interpretable.

\end{rem}

\subsection{Starting point}
The starting point is \Cref{eq:Branch-point}. However in this result, we assume that $\eta^k$ is actually $\cF^k$ measurable (that is, it is now allowed to also depend on the delays  $\p{\vj(0),\vj(1),\vj(2),\ldots,\vj(k)}$, whereas before it could only depend on the iterates $(x^0,x^1,x^2,\ldots)$). Recall the definitions of $E_j$ and $D_j$ given in \cref{eq:def:E-and-D}.

\begin{lem} \label{lem:Expectation-xi-Fk-deterministic}
Let \Cref{asmp:Block-sequence} and \Cref{asmp:Deterministic-delays} hold. Let $\eps_{1},\eps_{2},\ldots>0$
and $\del_{1},\del_{2},\ldots>0$ be a sequence of parameters. Let
$\eta^{k}$ be $\cF^{k}$-measurable. Define the step size functions:
\begin{align*}
h_{1}(j) & =\p{1+\frac{c_{1}}{m}+E_j}^{-1}, &&
h_{2}(j) =D^{-1}_j
\end{align*}
Let $\eta^{k}\leq h_{1}\p{j\p k}$. Then ARock yields the following inequality:
\begin{align*}
\EE\sp{\xi^{k+1}\big|\cF^{k}} & \leq\n{x^{k}}^{2}R\p{\eta^{k},h_{2}\p{j(k)}} +\frac{1}{m}\sum_{i=1}^{\infty}\p{\p{\eps_{i}+\p{1-r^{2}}\delta_{i}}+c_{i+1}}\n{x^{k+1-i}-x^{k-i}}^{2}
\end{align*}
\end{lem}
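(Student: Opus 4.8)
The plan is to follow the proof of \Cref{lem:Expectation-xi-Gk-stochastic} but to stop \emph{before} averaging over the delay distribution. In the deterministic setting $\eta^{k}$ is allowed to be $\cF^{k}$-measurable, hence may depend on the realized delay vector $\vj(k)$; since $\vj(k)$ --- and with it the current delay $j(k)$, the partial sums $E_{j(k)}$ and $D_{j(k)}$ of \eqref{eq:def:E-and-D}, and the thresholds $h_{1}(j(k))$, $h_{2}(j(k))$ --- is $\cF^{k}$-measurable, we keep everything conditioned on $\cF^{k}$ and allow the bound to depend explicitly on $j(k)$. The starting point is \eqref{eq:Branch-point}: that inequality was derived in \Cref{lem:E-Norm-Fk} (combined with the definition of $\xi^{k+1}$) using only \Cref{asmp:Block-sequence}, and its derivation only ever pulls $\eta^{k}$ out of a $\cF^{k}$-conditional expectation, so it remains valid for the $\cF^{k}$-measurable step sizes considered here.

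Given \eqref{eq:Branch-point}, three elementary steps finish the lemma. First, discard the $\n{S\hx^{k}}^{2}$ term: because $j\mapsto E_{j}$ is nondecreasing, the step-size cap $0\le\eta^{k}\le h_{1}(j(k))=\p{1+\tfrac{c_{1}}{m}+E_{j(k)}}^{-1}$ forces $1-\eta^{k}\p{1+\tfrac{c_{1}}{m}+E_{j(k)}}\ge 0$, so the term $-\tfrac{\eta^{k}}{m}\n{S\hx^{k}}^{2}\p{1-\eta^{k}\p{1+\tfrac{c_{1}}{m}+E_{j(k)}}}$ is nonpositive and can be dropped. Second, identify the coefficient of $\n{x^{k}}^{2}$: since $h_{2}(j)=D_{j}^{-1}$, we have $\eta^{k}D_{j(k)}=\eta^{k}/h_{2}(j(k))$, so $1-\tfrac{\eta^{k}}{m}\p{1-r^{2}}\p{1-\eta^{k}D_{j(k)}}$ is exactly $R\p{\eta^{k},h_{2}(j(k))}$ by \eqref{eq:def:R}. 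Third, merge the residual sums: in \eqref{eq:Branch-point} the weight of $\n{x^{k+1-i}-x^{k-i}}^{2}$ is $\tfrac{1}{m}\p{\p{\eps_{i}+\p{1-r^{2}}\del_{i}}\,\mathbf{1}_{\{i\le j(k)\}}+c_{i+1}}$, and since $\eps_{i},\del_{i}>0$, dropping the indicator only enlarges each weight, which gives the stated coefficient $\tfrac{1}{m}\p{\p{\eps_{i}+\p{1-r^{2}}\del_{i}}+c_{i+1}}$ for every $i\ge 1$. Assembling the three observations yields the displayed inequality.

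No step here is genuinely hard; the lemma is a rearrangement of \eqref{eq:Branch-point} tailored to a delay-dependent step size. The only things that require attention are the sign bookkeeping in the first step --- one must use both the monotonicity of $E_{j}$ and $\eta^{k}\ge 0$ to conclude $1-\eta^{k}\p{1+\tfrac{c_{1}}{m}+E_{j(k)}}\ge 0$ --- and, in the third step, the observation that replacing a finite sum by an infinite one is an upper bound only because the free parameters $\eps_{i},\del_{i}$ are positive. These are exactly the two facts that later allow the telescoping choice of coefficients in \eqref{eq:Coefficients-optimal-deterministic} to reabsorb the residual sum into $\xi^{k}$ and produce the linear rate \eqref{eq:thm:Linear-convergence-deterministic-full}.
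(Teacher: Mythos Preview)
Your proposal is correct and follows essentially the same route as the paper: both start from \eqref{eq:Branch-point}, use the cap $\eta^{k}\le h_{1}(j(k))$ to drop the $\n{S\hx^{k}}^{2}$ term, recognize the $\n{x^{k}}^{2}$ coefficient as $R\p{\eta^{k},h_{2}(j(k))}$, and replace the finite sum $\sum_{i=1}^{j(k)}$ by $\sum_{i=1}^{\infty}$ using positivity of $\eps_{i},\del_{i}$. Your remark that the derivation of \eqref{eq:Branch-point} only ever pulls $\eta^{k}$ out of an $\cF^{k}$-conditional expectation (and hence remains valid for $\cF^{k}$-measurable step sizes) is exactly the observation the paper makes just before stating the lemma; the aside about monotonicity of $j\mapsto E_{j}$ is harmless but unnecessary, since the cap already uses $E_{j(k)}$ itself.
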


The step size function $h_1$ and $h_2$ are complex expressions. When we eventually set the free parameters  $\eps_{1},\eps_{2},\ldots>0$
and $\del_{1},\del_{2},\ldots>0$, we simplify these functions with inequalities to yield $H_1$ and $H_2$, which are much easier to interpret.

\begin{proof}
From \Cref{eq:Branch-point} we have:
\begin{align*}
\EE\sp{\xi^{k+1}\big|\cF^{k}} & \leq\n{x^{k}}^{2}\p{1-\frac{\eta^{k}}{m}\p{1-r^{2}}\p{1-\eta^{k}D_{j(k)}}}-\frac{\eta^{k}}{m}\n{S\hx^{k}}^{2}\p{1-\eta^{k}\p{1+\frac{c_{1}}{m}+E_{j(k)}}}\\
 & +\frac{1}{m}\sum_{i=1}^{j(k)}\p{\eps_{i}+\p{1-r^{2}}\delta_{i}}\n{x^{k+1-i}-x^{k-i}}^{2}+\frac{1}{m}\sum_{i=1}^{\infty}c_{i+1}\n{x^{k+1-i}-x^{k-i}}^{2}\\
 & \leq\n{x^{k}}^{2}R\p{\eta^{k},h_{2}\p{j(k)}}-\frac{\eta^{k}}{m}\n{S\hx^{k}}^{2}\p{1-\eta^{k}/h_{1}\p{j(k)}}\\
 & +\frac{1}{m}\sum_{i=1}^{\infty}\p{\p{\eps_{i}+\p{1-r^{2}}\delta_{i}}+c_{i+1}}\n{x^{k+1-i}-x^{k-i}}^{2}
\end{align*}
Setting $\eta^{k}\leq h_{1}\p{j\p k}$ eliminates the $\n{S\hx^{k}}^{2}$
term, and yields the desired result.
\end{proof}

\subsection{Linear convergence}\label{ssec:Proof-of-thm-2}

\begin{prop}[Linear convergence for deterministic delays]\label{pro:Linear-convergence-general-deterministic}
Let the conditions of \Cref{lem:Expectation-xi-Fk-deterministic} hold. Also assume:
\begin{align}
\sum_{i=1}^{\infty}\p{\eps_{i}+\p{1-r^{2}}\del_{i}}\rho^{-i}<\infty\label{eq:General-moment-finite-deterministic}
\end{align}
Let the coefficients of the Lyapunov function be given by:
\begin{align}
c_i &= \sum_{l=i}^\infty \p{\eps_i+\p{1-r^2}\del_i}\rho^{l-i+1}
\end{align}
Then we have the following linear convergence rate at step $k$:
\begin{align}
\EE\sp{\xi^{k+1}\big|\cF^{k}} & \leq R\p{\eta^{k},h_{2}\p{j(k)}}\xi^{k}
\end{align}

\end{prop}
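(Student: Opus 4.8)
The plan is to imitate the proof of \Cref{pro:Linear-convergence-general-stochastic} almost verbatim, the only structural change being that the constants $\eta_1,\eta_2$ there are now replaced by the delay-dependent functions $h_1(j(k)),h_2(j(k))$ supplied by \Cref{lem:Expectation-xi-Fk-deterministic}. The analytically substantive work -- the AM--GM conversion of the cross term and the elimination of the $\n{S\hx^k}^2$ waste term via the cap $\eta^k\le h_1(j(k))$ -- is already packaged into that lemma, so what remains is purely combinatorial: to choose the Lyapunov coefficients $(c_i)$ so that the leftover block-difference terms collapse back into a multiple of the asynchronicity-error part of $\xi^k$.

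First I would invoke the finiteness hypothesis \eqref{eq:General-moment-finite-deterministic}: it says exactly that $\sum_{l\ge1}\p{\eps_l+(1-r^2)\del_l}\rho^{-l}<\infty$, i.e.\ that $c_1<\infty$, which is the single condition needed to apply \Cref{lem:Coefficient-formula} with $\rho$ as in \eqref{eq:def:rho} and $s_l=\eps_l+(1-r^2)\del_l$. That lemma then yields $c_i\downarrow0$ together with the recursion $\rho c_i=\p{\eps_i+(1-r^2)\del_i}+c_{i+1}$. Plugging this recursion into the bound of \Cref{lem:Expectation-xi-Fk-deterministic}, the coefficient of each $\n{x^{k+1-i}-x^{k-i}}^2$ becomes exactly $\rho c_i/m$, so
\[
\EE\sp{\xi^{k+1}\big|\cF^{k}}\ \le\ R\p{\eta^{k},h_{2}(j(k))}\n{x^{k}}^{2}+\frac{1}{m}\sum_{i=1}^{\infty}\rho\,c_{i}\,\n{x^{k+1-i}-x^{k-i}}^{2}.
\]

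To finish I would show $\rho\le R\p{\eta^{k},h_{2}(j(k))}$. Since $c_1\ge0$ and $E_{j(k)}=\sum_{i\le j(k)}1/\eps_i\ge0$, the cap $\eta^{k}\le h_1(j(k))=\p{1+c_1/m+E_{j(k)}}^{-1}$ forces $0<\eta^{k}\le1$; and a one-line computation from \eqref{eq:def:R} gives $R(\eta,\gamma)-\rho=\tfrac{1-r^{2}}{m}\p{1-\eta+\eta^{2}/\gamma}$, which is nonnegative whenever $0<\eta\le1$ and $\gamma>0$ (here $\gamma=h_2(j(k))=D_{j(k)}^{-1}>0$). Hence $\rho\le R\p{\eta^{k},h_{2}(j(k))}$, and because every $c_i\ge0$ we may replace $\rho c_i$ by $R\p{\eta^{k},h_{2}(j(k))}c_i$ termwise and factor it out, obtaining $\EE\sp{\xi^{k+1}\big|\cF^{k}}\le R\p{\eta^{k},h_{2}(j(k))}\xi^{k}$, which is the claim.

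The proof is routine once \Cref{lem:Expectation-xi-Fk-deterministic,lem:Coefficient-formula} are in hand; the only points needing care are that \eqref{eq:General-moment-finite-deterministic} is precisely the hypothesis $c_1<\infty$ demanded by \Cref{lem:Coefficient-formula}, and that the restriction $\eta^{k}\le h_1(j(k))$ does double duty -- it already eliminated the $\n{S\hx^{k}}^{2}$ term inside \Cref{lem:Expectation-xi-Fk-deterministic}, and it also guarantees $\eta^{k}\le1$, which is exactly what makes $\rho\le R$. The genuinely delicate work lies downstream of this proposition: unlike the stochastic case, the contraction factor $R\p{\eta^{k},h_{2}(j(k))}$ now fluctuates with the realized delay $j(k)$, so extracting an honest linear rate and showing the asynchronicity penalty is negligible as $m\to\infty$ requires the explicit choices of $\eps_i,\del_i,c_i$ and the good-behavior boundary $T(m)$ used in \Cref{thm:Linear-convergence-deterministic-delays-full}, not this general statement.
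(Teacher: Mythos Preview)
Your proposal is correct and follows essentially the same route as the paper: apply \Cref{lem:Coefficient-formula} with $s_l=\eps_l+(1-r^2)\del_l$ to obtain the recursion $\rho c_i=\p{\eps_i+(1-r^2)\del_i}+c_{i+1}$, substitute into the bound from \Cref{lem:Expectation-xi-Fk-deterministic}, and use $\eta^k\le h_1(j(k))\le1$ to conclude $\rho\le R(\eta^k,h_2(j(k)))$. Your write-up is in fact slightly more explicit than the paper's --- you spell out the identity $R(\eta,\gamma)-\rho=\tfrac{1-r^2}{m}(1-\eta+\eta^2/\gamma)$ to justify the final inequality, whereas the paper simply asserts it.
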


\begin{proof}
We apply \Cref{lem:Coefficient-formula} to \Cref{lem:Expectation-xi-Fk-deterministic} with $s_i=\eps_i+\p{1-r^2}\del_i$, and we obtain:
\begin{align}
\eps_i+\p{1-r^2}\del_i + c_{i+1} &\leq \rho c_i
\end{align}
Hence we have:
\begin{align}
\EE\sp{\xi^{k+1}\big|\cF^{k}} & \leq\n{x^{k}}^{2}R\p{\eta^{k},h_{2}\p{j(k)}} +\frac{1}{m}\sum_{i=1}^{\infty}\p{\p{\eps_{i}+\p{1-r^{2}}\delta_{i}}+c_{i+1}}\n{x^{k+1-i}-x^{k-i}}^{2}\\
& \leq\n{x^{k}}^{2}R\p{\eta^{k},h_{2}\p{j(k)}}+\frac{1}{m}\sum_{i=1}^{\infty}\rho c_{i}\n{x^{k+1-i}-x^{k-i}}^{2}\\
& \leq\max\cp{R\p{\eta^{k},h_{2}\p{j(k)}},\rho}\xi^{k}\\
& =R\p{\eta^{k},h_{2}\p{j(k)}}\xi^{k}
\end{align}
The last line follows, because $0\leq\eta^k\leq h_1(j)\leq1$ implies $\rho\leq R(\eta^k,\eta_2)$.
\end{proof}

\subsection{Proof of \Cref{thm:Linear-convergence-deterministic-delays-full}} \label{ssec:Proof-of-thm-3}

\begin{proof}[Proof of \Cref{thm:Linear-convergence-deterministic-delays-full}]
For the first part of the theorem, we simply set
\begin{align*}
\eps_{i} & =m^{1/2}\gamma^{i}\\
\del_{i} & =2m^{1/2}\gamma^{i}
\end{align*}
This choice automatically satisfies that conditions of \Cref{pro:Linear-convergence-general-deterministic} (i.e. \Cref{eq:General-moment-finite-deterministic}), and the coefficient formula that arises matches \Cref{eq:Coefficients-optimal-deterministic}.

We have step size functions $h_{1}$ and $h_{2}$ that are given by complicated expressions. Notice that we may overestimate $h_{1}$ and underestimate $h_{2}$, and the linear convergence result still holds. Hence we will simplify these step size expressions to give a more concise step size rule. We have:
\begin{align*}
\sum_{i=1}^{j}\gamma^{-j} & \leq\gamma^{-j}\sum_{i=0}^{\infty}\gamma^{i} =\gamma^{-j}\frac{1}{1-\gamma} \leq c^{-1}m^{q}\gamma^{-j}
\end{align*}
which yields:
\begin{align*}
\p{h_{2}\p j}^{-1} & =\sum_{i=1}^{j}\frac{1}{\del_{i}} \leq\frac{1}{2c}m^{q-\frac{1}{2}}\gamma^{-j}=H_{2}\p j
\end{align*}
which matches \Cref{eq:def:H}. We also have:
\begin{align*}
c_{1} & =\sum_{l=1}^{\infty}\p{\eps_{l}+\p{1-r^{2}}\del_{l}}\rho^{-l} =m^{1/2}\p{1+2\p{1-r^{2}}}\sum_{l=1}^{\infty}\p{\gamma/\rho}^{l} =m^{1/2}\p{1+2\p{1-r^{2}}}\frac{\gamma}{\rho-\gamma} \leq3c^{-1}m^{1/2+q}
\end{align*}
And hence:
\begin{align*}
h_{1}(j) & =\left(1+\frac{c_{1}}{m}+\sum_{i=1}^{j}\frac{1}{\eps_{i}}\right)^{-1} \geq\left(1+\frac{3}{c}m^{q-1/2}+c^{-1}m^{q-1/2}\gamma^{-j}\right)^{-1} =\left(1+\frac{1}{c}m^{q-1/2}\p{3+\gamma^{-j}}\right)^{-1}=H_{1}\p j
\end{align*}
which matches \Cref{eq:def:H}. Hence  \Cref{eq:thm:Linear-convergence-deterministic-full} is proven.

For the second part of the theorem, we need asymptotic expressions for $H_{1}\p T$ and $H_{2}\p T$ to determine an asymptotic convergence rate and iteration complexity. We first bound $\gamma^{-T}$
\begin{align*}
\gamma^{-T} & =\p{1-\p{1-\gamma}}^{-T}
 =\exp\p{T\p{1-\gamma}+\cO\p{T\p{1-\gamma}^{2}}} =\exp\p{bc+\cO\p{m^{-q}}} =\exp\p{bc}+\cO\p{m^{-q}}
\end{align*}
Hence this yields:
\begin{align*}
\p{h_{2}\p T}^{-1} & \leq\frac{1}{2c}m^{q-\frac{1}{2}}\p{\exp\p{bc}+\cO\p{m^{-q}}}\\
h_{1}(T) & \geq\left(1+\frac{1}{c}m^{q-1/2}\p{3+\exp\p{bc}+\cO\p{m^{-q}}}\right)^{-1} =1-\frac{1}{c}m^{q-1/2}\p{3+\exp\p{bc}}+o\p{m^{q-\frac{1}{2}}}
\end{align*}
Therefore, if $j\p k\leq T$, we have convergence rate:
\begin{align*}
R\p{H_{1}\p{j\p k},h_{2}\p{j\p k}} & \leq R\p{H_{1}\p{j\p T},H_{2}\p{j\p T}}\\
 & =1-\frac{1}{m}\p{1-r^{2}}\p{1-\frac{1}{c}m^{q-\frac{1}{2}}\p{3+\frac{3}{2}\exp\p{bc}}+o\p{m^{q-\frac{1}{2}}}}\\
 & =1-\frac{1}{m}\p{1-r^{2}}\p{1-\cO\p{m^{q-\frac{1}{2}}}}
\end{align*}
by \Cref{lem:Asymptotic-step-size-convergence-rate}. By \Cref{lem:Asymptotic-iteration-complexity} this corresponds to iteration complexity:
\begin{align*}
I\p{\eps} & =\p{1+\frac{1}{c}m^{q-1/2}\p{3+\frac{3}{2}\exp\p{bc}}+o\p{m^{q-\frac{1}{2}}}}\p{\frac{1}{1-r^{2}}}\ln\p{1/\eps}\\
 & =\p{1+\cO\p{m^{q-\frac{1}{2}}}}\p{\frac{1}{1-r^{2}}}\ln\p{1/\eps}
\end{align*}
which completes the proof.
\end{proof}

\pdfbookmark[0]{References}{References}
\printbibliography
\appendix
\section{Auxillary Results}

\subsection{Block KM Iterations}\label{ssec:Block-KM-Iterations}

\begin{proof}[Proof of \Cref{prop:Block-KM-rates}]
Taking conditional expectation on
\begin{align*}
\n{x^{k+1}}^{2} & =\n{x^{k}}^{2}-2\eta^{k}\dotp{x^{k},P^{k}x^{k}}+\p{\eta^{k}}^{2}\n{P^{k}x^{k}}^{2}
\end{align*}
with respect to $i(k)$ yields
\begin{align}
\EE\sp{\n{x^{k+1}}^{2}\big|x^{k}} & =\n{x^{k}}^{2}-2\eta^{k}\frac{p}{m}\dotp{x^{k},Sx^{k}}+\p{\eta^{k}}^{2}\frac{p}{m}\n{Sx^{k}}^{2}\nonumber \\
\text{(By \Cref{lem:S-inequality-Nesterov})} & \leq\n{x^{k}}^{2}-\eta^{k}\frac{p}{m}\p{\n{Sx^{k}}^{2}+\p{1-r^{2}}\n{x^{k}}^{2}}+\p{\eta^{k}}^{2}\frac{p}{m}\n{Sx^{k}}^{2}\nonumber \\
 & =\p{1-\eta^{k}\frac{p}{m}\p{1-r^{2}}}\n{x^{k}}^{2}-\eta^{k}\frac{p}{m}\p{1-\eta^{k}}\n{Sx^{k}}^{2},\label{eq:Branch-point-block-KM-1}
\end{align}
Let $\eta^{k}\leq1$, and we follow on from \eqref{eq:Branch-point-block-KM-1}:
\begin{align*}
\text{(By \ensuremath{(1-r)}-strong monotonicity of \ensuremath{S})} & \leq\p{1-\eta^{k}\frac{p}{m}\p{1-r^{2}}}\n{x^{k}}^{2}-\p{1-r}^{2}\eta^{k}\frac{p}{m}\p{1-\eta^{k}}\n{x^{k}}^{2}\\
 & =\p{1-\frac{p}{m}+\frac{p}{m}\p{1-\eta^{k}\p{1-r}}^{2}}\n{x^{k}}^{2}.
\end{align*}
Now let $\eta^{k}\geq1$, and we follow on again from \ref{eq:Branch-point-block-KM-1}.
\begin{align*}
 & \EE\sp{\n{x^{k+1}}^{2}\big|x^{k}}\\
\text{(Since \ensuremath{S} is \ensuremath{(1+r)}-Lipschitz)} & \leq\p{1-\eta^{k}\frac{p}{m}\p{1-r^{2}}}\n{x^{k}}^{2}-\eta^{k}\frac{p}{m}\p{1-\eta^{k}}\p{1+r}^{2}\n{x^{k}}^{2}\\
 & =\p{1-\frac{p}{m}+\frac{p}{m}\p{1-\eta^{k}\p{1+r}}^{2}}\n{x^{k}}^{2}.
\end{align*}

It can be verified that every single inequality for $\eta^{k}\leq1$
is an equality for $T=rI$, and every single inequality for $\eta^{k}\geq1$
is an equality for $T=-rI$. Therefore the inequalities give a sharp
rate of convergence. This rate is optimized when $\eta^{k}=1$, and
matches the rate given in \Cref{eq:def:rho}.

Let's now look at the corresponding iteration complexity:
\begin{align*}
\p{1-\frac{p}{m}\p{1-r^{2}}}^{I(\eps)\frac{m}{p}} & =\eps\\
I(\eps) & =\frac{p}{m}\p{\frac{\ln\p{1/\eps}}{-\ln\p{1-\frac{p}{m}\p{1-r^{2}}}}}
\end{align*}
Note that:
\begin{align*}
1-x & \leq\frac{-x}{\ln\p{1-x}}\leq1-\frac{1}{2}x
\end{align*}
 and hence
\begin{align*}
 & I(\eps)=\frac{1}{1-r^{2}}\p{1-\theta\frac{p}{m}\p{1-r^{2}}}\ln\p{1/\eps}
\end{align*}
where $\theta\in\sp{\frac{1}{2},1}$. This matches \Cref{eq:Optimal-rate-random-subset-KM}, hence the proof is complete.
\end{proof}

\subsection{Random-Subset Block Gradient Descent} \label{ssec:Block-gradient-descent}

\begin{proof}[Proof of \Cref{cor:BCD-synchronous-rates}]
First we consider the operator $T=I-\frac{2}{L+\mu}\nabla f$:
\begin{align*}
\n{T(y)-T(x)}^{2}
 & =\n{y-x}^{2}-\frac{4}{\mu+L}\dotp{\nabla f\p y-\nabla f\p x,y-x}+\p{\frac{2}{L+\mu}}^{2}\n{\nabla f\p y-\nabla f\p x}^{2}\\
\text{(By Theorem 2.1.12 of \cite{Nesterov2013_introductory})} & \leq\n{y-x}^{2}-\frac{4}{\mu+L}\p{\frac{\mu L}{\mu+L}\n{x-y}^{2}+\frac{1}{\mu+L}\n{\nabla f\p y-\nabla f\p x}^{2}}\\
 & +\p{\frac{2}{L+\mu}}^{2}\n{\nabla f\p y-\nabla f\p x}^{2}\\
 & =\n{y-x}^{2}\p{1-\frac{4\mu L}{\p{\mu+L}^{2}}} =\n{y-x}^{2}\p{1-\frac{4\kappa}{\p{1+\kappa}^{2}}} =\n{y-x}^{2}\p{1-\frac{2}{\kappa+1}}^{2}
\end{align*}
Hence $T$ is $\p{1-\frac{2}{\kappa+1}}$-Lipschitz.

We now determine the linear convergence rate:
\begin{align*}
r & =1-\frac{2}{\kappa+1}\\
1-r^{2} & =\frac{4}{\kappa+1}\p{\frac{\kappa}{\kappa+1}} =\frac{4\kappa}{\p{\kappa+1}^{2}}
\end{align*}
And hence using \Cref{prop:Block-KM-rates} we have:
\begin{align*}
R & =1-\frac{p}{m}\p{1-r^{2}} =1-\frac{p}{m}\frac{4\kappa}{\p{\kappa+1}^{2}}
=1-4\frac{p}{m\kappa}\p{1+\cO\p{1/\kappa}}
\end{align*}
which matches \Cref{eq:Convergence-rate-gradient-sharp}. Next we determine the iteration complexity using \Cref{prop:Block-KM-rates}
again:
\begin{align*}
I(\eps) & =\p{\frac{1}{1-r^{2}}-\theta\frac{p}{m}}\ln\p{1/\eps} =\p{\frac{\p{\kappa+1}^{2}}{4\kappa}-\theta\frac{p}{m}}\ln\p{1/\eps} =\frac{1}{4}\p{\kappa+2+\frac{1}{\kappa}-4\theta\frac{p}{m}}\ln\p{1/\eps}\\ &=\frac{1}{4}\p{\kappa+\cO\p 1}\ln\p{1/\eps}
\end{align*}
This matches \Cref{eq:Iteration-complexity-gradient-sharp}.

Lastly, we prove that the convergence rate is sharp. Recall the proof of \Cref{prop:Block-KM-rates}. Now consider the function $f\p x=\frac{1}{2}\mu\n x^{2}$. This leads to $T=I\p{1-\frac{2}{\kappa+1}}$, which is equal to the worst-case example for $\eta\leq1$. Additionally, consider $f\p x=\frac{1}{2}L\n x^{2}$, which leads to $T=-I\p{1-\frac{2}{\kappa+1}}$. This is the worst-case example for $\eta\geq1$. Therefore, since the worst case examples in the previous are attained by $\mu$-strongly convex functions with $L$-Lipschitz gradients, we can see that the convergence rate for error $\n{x^{k}-x^{*}}$is sharp.
\end{proof}

\end{document}